\documentclass[12pt,twoside]{article}
\def\version{20.7.2026}\def\users{}  %
\def\users{final-layout}  
\usepackage{times}
\usepackage{enumerate}
\usepackage{amsmath,amsfonts,amssymb,color}
\usepackage{mathrsfs} 
\usepackage{bm}
\usepackage{epsfig}
\usepackage{psfrag}
\usepackage{hyperref}
\usepackage{upgreek} 
\usepackage[format=hang, indention=0cm, labelformat=simple, labelsep=space, textformat=simple, justification=justified, singlelinecheck=true, labelfont=bf, textfont=sl,width=1.0\textwidth,figurename=Figure,skip=15pt]{caption}
\usepackage{cite}

\newtheorem{theorem}{Theorem}[section]

\newtheorem{definition}[theorem]{Definition}

\newtheorem{proposition}[theorem]{Proposition}

\newtheorem{remark}[theorem]{Remark}

\newcommand{\lineunder}[2]{\LU{\begin{array}[t]{c}\underbrace{#1}\vspace*{.5em}\end{array}}{\mbox{\footnotesize\rm #2}}}

\newcommand{\LU}[2]{\begin{array}[t]{c}#1\vspace*{-1em}\\_{#2}\end{array}}

\newcommand{\Fezero}{\FF_{\hspace*{-.1em}\mathrm e,0}^{}}

\numberwithin{equation}{section}
\usepackage{ifthen}
\ifthenelse{\equal{\users}{final-layout}}{
\markboth{T.Roub\'\i\v cek}{Stefan problem with complete melting}
}{
\usepackage{fancyhdr}
\pagestyle{fancy}
\headheight=28pt\headwidth=16.5cm
\definecolor{gray}{gray}{0.5}
\rhead{\color{gray}Stefan problem \\
 T.Roub\'\i\v cek}
\chead{}
\lhead{Version \version, file: \jobname.tex \\
compiled:
\number\day.\number\month.\number\year\ at
\the\hour:\ifnum\minute<10 0\fi\the\minute\ h }
}

\ifthenelse{\equal{\users}{final-layout}}{}{
\usepackage[notcite,notref,color]{showkeys}
\definecolor{labelkey}{rgb}{1.,.2,0.}

}

\newcount\hour \newcount\minute
\hour=\time
\divide \hour by 60
\minute=\time
\loop \ifnum \minute > 59 \advance \minute by -60 \repeat

\usepackage[normalem]{ulem}
\definecolor{brown}{rgb}{0.5,0,0}
\ifthenelse{\equal{\users}{final-layout}}{

    \newcommand{\DELETE}[1]{}
    
    \newcommand{\COMMENT}[1]{}
    
    \newcommand{\TINY}[1]{}
    \newcommand{\MARGINOTE}[1]{}
}
{

 \newcommand{\DELETE}[1]{{\color{brown}\sout{#1}\color{black}}}
 
 \newcommand{\COMMENT}[1]{{\color{red}\uuline{#1}\color{black}}}
 
 \newcommand{\TINY}[1]{{\tiny#1}}
 \newcommand{\MARGINOTE}[1]{\marginpar{\color{red}\tiny\texttt{#1}}}
}

\newcommand{\Item}[3][0.05]{\parbox[t]{#1\textwidth}{#2}\hfill%
      \parbox[t]{\dimexpr\textwidth-#1\textwidth}{#3}\vspace*{.8mm}}

\newcommand{\wt}[1]{\mathchoice
     {\text{\small$\widetilde{\text{\normalsize$#1$}}\hspace*{.03em}$}}
                    {\text{\small$\widetilde{\text{\normalsize$#1$}}$}}
                    {\widetilde{#1\hspace*{-.02em}}\hspace*{.03em}}
                    {\widetilde{#1}}}
\newcommand{\ITEM}[2]{\parbox[t]{.05\textwidth}{{\rm #1}}\hfill\parbox[t]{.95\textwidth}{#2}\vspace*{.5em}\\}
\def\R{{\mathbb R}}
\newcommand{\barOmega}{\hspace*{.2em}{\wb{\hspace*{-.2em}\varOmega}}}
\newcommand\bbC{\mathbb C}
\newcommand\bbD{\mathbb D}
\renewcommand\d{\mathrm d}
\newcommand{\nablaS}{\nabla_{\scriptscriptstyle\textrm{\hspace*{-.3em}S}}^{}}
\newcommand{\divS}{\mathrm{div}_{\scriptscriptstyle\textrm{\hspace*{-.1em}S}}^{}}
\newcommand{\FF}{\bm{F}}
\newcommand{\Fe}{\FF_{\hspace*{-.15em}\mathrm e^{^{^{}}}}}
\newcommand{\Fee}{\FF_{\hspace*{-.2em}\mathrm e}}
\def\FedevTAU{{\bm{F}_{\!{\rm e},\TAU}^\ast}}
\newcommand{\Fedev}{{\bm{F}_{\hspace*{-.05em}\mathrm e}^\ast}}
\newcommand{\Feedev}{{\bm{F}_{\hspace*{-.05em}\mathrm e}^\ast}}
\def\tildeFedevTAU{\wt{\bm{F}}_{\!\!{\rm e},\TAU}^\ast}

\newcommand{\Fetop}{\FF_{\hspace*{-.2em}\mathrm e}^\top}
\newcommand{\Fedevzero}{\FF_{\hspace*{-.2em}\mathrm e,0}^\ast}
\newcommand{\Fp}{\FF_{\hspace*{-.15em}\mathrm p^{^{^{}}}}}
\newcommand{\Fpp}{\FF_{\hspace*{-.15em}\mathrm p}}
\newcommand{\Lp}{\bm{L}_{\hspace*{-.0em}\mathrm p^{^{^{}}}}}
\newcommand{\Le}{\bm{L}_{\hspace*{-.0em}\mathrm e^{^{^{}}}}}
\newcommand{\eq}[1]{(\ref{#1})}
\newcommand{\zetap}{\zeta_{\rm vp}}
\newcommand\pdt[1]{\frac{\partial{#1}}{\partial t}} 
\newcommand\DT[1]{\mathchoice
                 {{\buildrel{\hspace*{.1em}\text{\Large.}}\over{#1}}}
                 {{\buildrel{\hspace*{.1em}\text{\large.}}\over{#1}}}
                 {{\buildrel{\hspace*{.1em}\text{\large.}}\over{#1}}}
                 {{\buildrel{\hspace*{.1em}\text{\large.}}\over{#1}}}}
\newcommand{\Frac}[2]{\mathchoice{\text{\small$\frac{#1}{#2}$}}
                                 {\text{\large$\frac{#1}{#2}$}}
                                 {\text{\large$\frac{#1}{#2}$}}
                                 {\text{\large$\frac{#1}{#2}$}}}
\newcommand{\wb}[1]{\mathchoice{\hspace*{-.09em}\text{\large$\hspace*{.09em}\bar{\text{\normalsize$#1$}}\hspace*{.05em}$}\hspace*{-.05em}}
  {\hspace*{-.09em}\text{\large$\hspace*{.09em}\bar{\text{\normalsize$#1$}}\hspace*{.05em}$}\hspace*{-.05em}}
  {\text{\normalsize$\hspace*{.08em}\bar{\text{\scriptsize$#1$}}\hspace*{.06em}$}}
  {\text{\small$\bar{\text{\tiny$#1$}}$}}}
\def\Vdots{\!\mbox{\setlength{\unitlength}{1em}
\begin{picture}(0,0)
\put(-.07,0){.}
\put(-.07,.3){.}
\put(-.07,.6){.}
\end{picture}\hspace*{.2em}}}
\def\ENT{\mbox{\small{${\mathscr U}$}}}
\def\Eng{e}
\def\ENG{\mbox{\small{${\mathscr E}$}}}
\def\Ent{u}
\def\DD{\bm{D}}
\def\TT{\bm{T}}
\def\MM{\bm{M}}
\def\GG{\bm{G}}
\def\LL{\bm{L}}
\def\yy{\bm{y}}
\def\xx{\bm{x}}
\def\tt{\bm{t}}
\def\vv{\bm{v}}
\def\vvk{\bm{v}_\tau^k}
\def\pp{\bm{p}}
\def\qq{\bm{q}}
\def\nn{\bm{n}}
\def\Cdot{{\cdot}}
\def\bbI{\mathbb{I}}
\def\GM{M}
\def\NU{R}
\newcommand{\COUPLING}{\gamma}
\newcommand{\rhoR}{\varrho_\text{\sc r}}
\def\boldT{{\mathscr{T}\hspace*{-1.08em}\mathscr{T}\hspace*{-.1em}}}
\def\boldM{{\mathscr{M}\hspace*{-1.25em}\mathscr{M}\hspace*{-.1em}}}

\def\boldG{{\mathscr{G}\hspace*{-.85em}\mathscr{G}\hspace*{-.1em}}}
\def\ADI{\mbox{\Large{$_{\mathcal P}$}}}
\newcommand{\GRAVITY}{{\bm g}}
\newcommand{\strain}{{\boldsymbol{\varepsilon}}}
\newcommand{\Colon}{\hspace{-.15em}:\hspace{-.15em}}
\newcommand{\Rdev}{\R_{\rm dev}^{3\times3}}
\newcommand{\Rsym}{\R_{\rm sym}^{3\times3}}
\newcommand{\Rtr}{\R^{3\times3}_{\rm dev}}
\def\HYPER{\mathcal{D}}

\def\LpTAU{\LL_{{\rm p},\TAU}}

\def\TAU{\tau}
\def\DEV{{\rm dev}}
\def\Ld{\bm{L}_{\rm e}^{\!*\,}}
\def\LdTAUk{{\bm{L}^{k}_{{\rm e},\TAU}}}
\def\TAU{\tau}

\def\LdTAUk{\LL^{\!*\,k\ }_{{\rm e},\TAU}}
\def\LdTAUm{\LL^{\!*\,m\ }_{{\rm e},\TAU}}
\def\opT{\wb{\pp}_\TAU}
\def\ovT{\wb{\vv}_\TAU}
\def\osT{\wb{\sigma}_\TAU}
\def\orT{\wb{\varrho}_\TAU}
\def\utT{\underline{\theta}_\TAU}
\def\otT{\wb{\theta}_\TAU}
\def\uJT{\underline{J}_\TAU}
\def\oJT{\overline{J}_\TAU}
\def\oxT{\wb{\chi}_\TAU}
\def\oeT{\wb{\Eng}_\TAU}
\def\uxT{\underline{\chi}_\TAU}
\def\ueT{\underline{\Eng}_\TAU}
\def\uuT{\underline{\Ent}_\TAU}
\def\urT{\underline{\varrho}_\TAU}

\def\uTT{\underline{\bm T\!}_\TAU}
\def\ouT{\wb{\Ent}_\TAU}
\def\oDTzero{\vspace{.15em}\wb{\vspace{-.15em}\DD}_{\!0,\TAU}}
\def\oDTone{\vspace{.15em}\wb{\vspace{-.15em}\DD}_{\!1,\TAU}}
\def\oLdT{\wb{\LL}_{{\rm e},\TAU}^{\!*\ }}
\def\uFedevT{{\underline{\FF}_{\!{\rm e},\TAU}^\ast}}
\def\oFedevT{\wb{\FF}_{\!{\rm e},\TAU}^\ast}
\def\oLpT{\wb{\LL}_{{\rm p},\TAU}}
\def\ALPHA{\alpha}
\def\EXP{\mu}
\def\THRESHOLD{E}

\begin{document}

\allowdisplaybreaks

\newcommand{\subjclass}[1]{\bigskip\noindent\emph{2020 Mathematics Subject Classification:}\enspace#1}
\newcommand{\keywords}[1]{\noindent\emph{Keywords:}\enspace#1}

\title{The Stefan problem for complete melting of finitely strained solids into viscoelastic fluids
}

\author{Tom\'{a}\v{s} Roub\'\i\v{c}ek\\
Mathematical Institute, Charles University
\\ Sokolovsk\'a 83, CZ-186~75~Praha~8, Czechia\\
tomas.roubicek@mff.cuni.cz\\
and\\
Institute of Thermomechanics, Czech Academy of Sciences\\Dolej\v skova~5,
CZ-182~08 Praha 8, Czechia}

\date{}

\maketitle

\begin{abstract}
The compressible fluid-solid interaction (FSI) with a thermomechanical phase transition is formulated at large strains within the Eulerian frame. For the deviatoric part, the Jeffreys (also called anti-Zener) rheology with an additional viscosity is adopted. The core philosophy governing the mechanical solid-liquid transition is that the viscous (or viscoplastic) response is temperature-dependent and may fully degenerate to a viscoelastic fluid during thawing, so that there is no elastic response on the shear distortion. This behavior enables the free flow of the fluid, its subsequent freezing into a new configuration, and potential re-melting back into a fluid, allowing such cycles to repeat indefinitely. The classical Stefan problem, associated with the latent heat of the first-order (thawing-freezing) phase transition, is augmented by incorporating kinetic overheating and undercooling. The analysis by a time discretization with an appropriate truncation is applied to a higher-gradient modification of the original formulation, utilizing the concept of multipolar nonsimple continua.

\subjclass{
  35Q35, 
  35Q74, 
  74A30, 
  74C20, 
  74N20, 
  74F10, 
  80A22. 

}

\medskip

\keywords{Fluid-solid interaction, phase transition, water freezing,
  ice melting, Stefan problem,
overheating, undercooling
  multipolar media, fluid-solid transition, time discretization.}
\end{abstract}

\section{Introduction}\label{sec-intro}

Since the seminal talk \cite{Stef89PTW} of Josef Stefan 
on the meeting of the Imperial Academy of Sciences in Vienna
on May 21, 1889, the so-called {\it Stefan problem}
experienced a vast development during the whole 20th century.
In his talk, he formulated the one-dimensional
free-boundary problem arising in melting of ice to water with a
specific latent heat in the frame of mere heat transfer.
This solid-liquid phase transition was long time considered rather 
as a mere thermal problem, enhanced with various
fine phenomena (like supercolling/heating or curvature-dependent
interface tension etc.), possibly in a media with prescribed movement (as
in continuous casting), or in incompressible, merely liquid environment
\cite{DiBOLe93TDCC,FukKen05SPCG,Nedo97CSLP,RodUrb98SBSP,RodUrb02TDCS,XuShi97SPCJ}
possibly allowing only ``phase-wise'' incompressibility \cite{PruShi12WPIT}.
The temperature-dependent shear viscosity (but not allowing
the complete melting) in a linearized semi-compressible model was
considered in \cite{Roub23SPTC}. This sort of problems has attained 
also attention in engineering (not-analytically supported) research,
as e.g.\ \cite{Myer25TPSP,MyHeCS20SPVT,WVKT23USPK}.

The prominent application (and the original motivation of the aforementioned
Stefan's talk) is the ice-water transition and glaciology in a wider sense. Ice
is conventionally modelled as a Maxwellian solid
with a Glen's (phenomenological) law \cite{Glen55CPI}, i.e.\ mechanically
as a non-Newtonian shear-thinning viscoelastic (or sometimes viscous
incompressible) fluid, while water is a Newtonian fluid naturally without any
elastic shear response. If compressible (as considered in this paper),
the elastic volumetric response is presented both in the solid and the liquid
phases. As a result, pressure (longitudinal) waves can propagate
in both phases (with the velocity in ice and in water about 3.9 and
1.5\,km/s, respectively) while shear waves can propagate only in the
solid phase (with the velocity in ice about 2\,km/s).
As a result, since the shear waves are completely blocked in the
liquid phase, the elastic waves are reflected and refracted on the
phase interfaces. This experimentally justifies the phenomenon
that the shear elasticity should be completely eliminated in the liquid phase,
cf.~Figure~\ref{fig-rheology-Jeff-St-thermo} below. Of course, attenuation
of waves due to viscosity in both the solid and the liquid phases is
also presented.

Yet, such model does not seem easily feasible for rigorous
analysis for three main reasons:\smallskip\\
\Item{(i)}{The aforementioned shear elastic response 
drops to zero when the medium is
melted, i.e.\ a transition from a regular
elasticity to degenerate elasticity (and therefore the elastic distortion
rate becomes uncontrolled within this transition), which corrupts
usual shear inelastic distortion rate bounds.
.}\smallskip\\
\Item{(ii)}{There are naturally
some discontinuities across the phase-transition temperature.
Beside the aforementioned elastic response
which drops to zero possibly even discontinuously, there are
also in some transport coefficients which are typically jumping
during the solid-liquid transition, typically the thermal conductivity
coefficients or (if involved) some diffusivity or electrical
conductivity, etc. For example, thermal conductivity of water is about
0.6 W/(m${\cdot}$K) while the thermal conductivity
of ice at the freezing temperature is 2.2 W/(m${\cdot}$K), so there
is a substantial discontinuity across the phase transition.
}\smallskip\\
\Item{(iii)}{many nonlinearities in 
  visco-elastodynamics at large strains makes analytical troubles, which
  leads to necessity of various very weak solution concepts, if 
  a concept of nonsimple materials involving some higher gradients would not
  be used.}
For these reasons, the basic philosophy is to be modified without
destroying the ability for complete melting towards a viscoelastic
fluid with unlimited repetition of freezing-melting cycles and
respecting typical discontinuity of some parameters
as viscosity or thermal conductivity. Specifically, to cope with (i), we will
respectively achieve it by including the additional viscosity into the
elasticity which makes it ``parabolic'' of the Kelvin-Voigt type even
if the Maxwellian viscoplastic element drops suddenly to zero within
melting. This will be performed by involving the kinetic super-cooling/heating
(also referred as undercooling and overheating)
as in \cite{ColSpr95PFMZ,Visi85SPPH,Visi85SSEP}, which also helps to cope
with (ii) beside reflecting some physically relevant phenomena, 
cf.\ also \cite{Visi86SPKC} for a wider discussion.
Eventually, to overcome (iii),
we exploit the concept of nonsimple, multipolar materials by considering a
gradient enhancement of the viscosities in a sophisticated way which still
allows for complete melting.

As another novelty in contrast to \cite{Roub23SPTC} where a semi-compressible
linearized convected model was considered, we consider a fully
compressible finitely-strained medium.

For readers' convenience, let us summarize the basic notation used in what
follows:
\vspace*{-.9em}
\begin{center}
\fbox{
\begin{minipage}[t]{15em}\small\smallskip
$\yy$ deformation,\\
  $\vv$ velocity,\\
$\varrho$ mass density,\\
$\pp=\varrho\vv$ the linear momentum,\\
$\theta$ (absolute) temperature,\\
$\chi$ phase fraction,\\
$\Fe$ elastic distortion,\\
  $\Fedev=\Fe/J^{1/3}$ the isochoric part of $\Fe$,\\
$\GRAVITY$ gravity acceleration,\\
$\HYPER_0$, $\HYPER_1$ the hyper-viscosity coefficients,\\
$I=[0,T]$ a time interval, $T>0$,\\
$\psi:\R^{3\times 3}\to\R$ free energy,\\[.1em]
$\eta=-\psi_\theta'$ entropy,\\
$(\cdot)'$ (partial) derivative of a  mapping,\\[.1em]
$(\cdot)\!\DT{^{\,}}$ convective time derivative,
\end{minipage}
\begin{minipage}[t]{20em}\small\smallskip
$\TT$ Cauchy stress,\\
$\DD$ dissipative stress,\\
$\MM$ Mandel's stress,\\
$\Lp$ inelastic distortion rate,\\
$\Le$ deviatoric part of elastic distortion rate,\\
$\kappa$ the thermal conductivity,\\
$\bbI$ the identity matrix,\\
tr$(\cdot)$ trace of a matrix,\\
dev$(\cdot)$ deviatoric part of a matrix, dev$E:=E{-}\frac{{\rm tr}E}3\bbI$,\\
$\NU$ relaxation time for superheating/cooling,\\
$\R_{\rm sym}^{3\times3}$ set of symmetric matrices,\\
$\R_{\rm dev}^{3\times3}=\{A\in\R^{3\times3};\ {\rm tr}A=0\}$,\\
``$\:\Cdot\:$'', ``$\:\Colon\:$'' scalar products of vectors or matrices,\\ 
``$\,\Vdots\,$'' scalar products 3rd-order tensors,\\ 
$\tau>0$ a time step for discretization.
\smallskip \end{minipage}
}\end{center}
\nopagebreak
\vspace{-.9em}
\nopagebreak
\begin{center}
{\small\sl Table\,1.\ }
{\small
Summary of the basic notation used. 
}
\end{center}

The plan of this paper is first to discus the mere Stefan problem
without any mechanical context in 
Section~\ref{sec-Stefan}, including the kinetic superheating/cooling.
Then the mechanical context at large strains in Eulerian frames
is involved in Section~\ref{sec-thermo}. Eventually, the analysis in
a variant with enhanced (multipolar) viscosities is presented by a
partly decoupled time discretization in Section~\ref{sec-anal}.

\section{Basic philosophy, the Stefan problem, and a phase fraction.}\label{sec-Stefan}

Considering the nonlinear heat equation
$c(\theta)\pdt{}\theta={\rm div}(\kappa(\theta)\nabla\theta)$ with the heat
capacity $c=c(\theta)$ and the thermal conductivity $\kappa=\kappa(\theta)$
dependent on temperature $\theta$, the basic philosophy of treating the
Stefan problem is the transformation into the equation
\begin{align}
  \pdt{}\ENT(\theta)+{\rm div}\,\qq=0
\ \ \text{ with }\ \qq=-\kappa(\theta)\nabla\theta\,,
\label{heat-enthalpy}\end{align}
where the ``enthalpy''
(or rather the {\it thermal internal energy}) $\ENT$ is a primitive function (i.e.\ the
antiderivative)
of $c$ and $\qq$ is the heat flux governed by the (phenomenological) Fourier
law. This allows us to consider a Dirac function contributing to $c$ and 
supported at a phase-transition temperature $\theta_\text{\sc pt}$ 
with the magnitude describing the latent heat. This results to a jump in $\ENT$.
Then, \eq{heat-enthalpy} is the so-called enthalpy formulation of the Stefan
problems, cf.\ Figure~\ref{fig-Stefan}-left. The thermodynamics behind
\eq{heat-enthalpy} departs from the free energy $\psi=\psi(\theta)$ which
determines the entropy $\eta=-\psi'(\theta)$ governed by the entropy
equation
\begin{align}\nonumber\\[-2.7em]
\pdt{\eta}=-({\rm div}\,\qq)/\theta\,.
\label{heat-entropy}\end{align}
In fact, \eq{heat-enthalpy} follows from \eq{heat-entropy} with 
the internal energy $\ENT$ given by the Gibbs relation
$\ENT(\theta)=\psi(\theta)-\theta\psi'(\theta)$. Then
the heat capacity is $c(\theta)=\ENT'(\theta)=-\theta\psi''(\theta)$.
Notably, integrating \eq{heat-entropy} over a spatial domain $\varOmega$ and
assuming the thermal isolation (i.e.\ adiabatic system) with the
normal heat flux $\nn\Cdot\qq=0$ on the boundary of $\varOmega$,
we obtain the total entropy balance $\frac{\d}{\d t}\int_\varOmega\eta\,\d\xx
=\int_\varOmega\kappa(\theta)|\nabla\theta|^2/\theta^2\,\d\xx$, which reveals
the (non-negative) entropy production rate and the 2nd law of thermodynamics
if $\kappa(\theta)\ge0$.

The aforementioned possible discontinuity of $\kappa$ at the phase-transition
temperature $\theta_\text{\sc pt}$ can be handled by so-called
Kirchhoff transformation, i.e.\ by inventing
the primitive function $K$ of $\kappa$ and then using the transformed
temperature $\vartheta=K(\theta)$. Thus $\theta=K^{-1}(\vartheta)$ and this heat
equation transforms to $\pdt{}[\ENT{\circ}K^{-1}](\vartheta)=\Delta\vartheta$.
Yet, this latter transformation would be complicated if $\kappa$
were depending also on some other variables and, mainly, would
not cope with other discontinuities, as in viscous moduli depicted in
Figure~\ref{fig-Stefan}-right below.

This is why we will cope with such discontinuities by another way, namely the
aforementioned kinetic undercooling and overheating.
Moreover, we will use the ``enthalpy'' transformation in a fully
thermomechanical concept with $\ENT$ playing the role of the thermal energy
as a part of the internal energy. 

The problem of discontinuities of certain parameters
across the phase-transition temperature $\theta_\text{\sc pt}$ (as the Maxwellien-type
viscosity in Figure~\ref{fig-Stefan} below or the heat conductivity etc.) is analytically
troublesome. This suggests to make their dependence on some other variable
than on temperature.
In view of Figure~\ref{fig-Stefan}, a first choice would be the heat internal
energy $\Ent$ but it is conceptually doubtful to make them dependent on
an extensive
variable like $\Ent$. More suitably, we invent a {\it phase fraction}
(in a position of a {\it phase field})   
as an intensive variable $\chi$ ranging the interval $[0,1]$ with the
meaning $\chi=0$ for solid and  $\chi=1$ for liquid.

We can split the discontinuous nonlinearity $\ENT$ in \eq{heat-enthalpy}
into a its continuous part, denoted by $\wt\ENT$, and the rest as
a step-wise function. Specifically, it gives
\begin{align}
  \pdt{\Ent}={\rm div}(\kappa(\theta)\nabla\theta)\,,
\ \ \text{ where }\ \ 
\Ent=\wt\ENT(\theta)+
L\chi
\ \ \text{ with }\ \ \chi\in H\big(\theta{-}\theta_\text{\sc pt}\big)\,,
\label{heat-enthalpy+}\end{align}
where $H$ denotes the set-valued Heaviside function, i.e.\
$H(\cdot)=0$ on $(-\infty,0)$ and $H(\cdot)=0$ on $(0,+\infty)$
while, at 0, it equals to the interval $[0,1]$. 
In fact, \eq{heat-enthalpy+} can again be derived from a concave free energy
which is now only piecewise $C^1$, namely
\begin{align}
   &\qquad\qquad\psi(\theta)=\widetilde\psi(\theta)
-\begin{cases}\qquad 0
&\text{if }\ \theta\le\theta_\text{\sc pt}\,,
\\[-.1em]\
L\,\big(\,\theta/\theta_\text{\sc pt}-1\big)
&\text{if }\ \theta>\theta_\text{\sc pt}\,.
    \end{cases}
\label{split-phi}
\end{align}
The continuous part $\wt\ENT$ in \eq{heat-enthalpy+} is now
given by $\wt\ENT(\theta)=\widetilde\psi(\theta)-\theta\widetilde\psi'(\theta)$
while the last term in \eq{split-phi} indeed contributes as
$LH(\theta{-}\theta_\text{\sc pt})$ into
$\ENT(\theta)=\psi(\theta)-\theta\psi'(\theta)$.

Let us illustrate usage of $\chi$ for treating  a
discontinuity of the thermal conductivity $\kappa=\kappa(\theta)$
by distinguishing $\kappa$ for the solid and for the
liquid phases, denoting $\kappa_\text{\sc s}=\kappa_\text{\sc s}(\theta)$
for $\theta\le\theta_\text{\sc pt}$ and 
$\kappa_\text{\sc l}=\kappa_\text{\sc l}(\theta)$ for $\theta\ge\theta_\text{\sc pt}$,
respectively. Both $\kappa_\text{\sc s}$ and $\kappa_\text{\sc l}$ should be assumed
continuous on $[0,\theta_\text{\sc pt}]$ and on $[\theta_\text{\sc pt},+\infty)$,
respectively. We can extend them continuously as 
$\kappa_\text{\sc s}(\theta)=\kappa_\text{\sc s}(\theta_\text{\sc pt})$ for
$\theta\ge\theta_\text{\sc pt}$ and
$\kappa_\text{\sc l}(\theta)=\kappa_\text{\sc l}(\theta_\text{\sc pt})$ for
$\theta\le\theta_\text{\sc pt}$. Then we can define the resulted modulus
$\kappa=\kappa(\theta,\chi)$ at the phase transition as a convex combination
\begin{align}
\kappa(\theta,\chi)=(1{-}\chi)\kappa_\text{\sc s}(\theta)+\chi \kappa_\text{\sc l}(\theta)\,,
\label{M-convex-combination}\end{align}
relying on that $\chi\in H(\theta)$.

The basic Stefan problem can be enriched by various fine effects, in
particular towards {\it supercooling}\index{supercooling/heating}
(i.e.\ the effect of allowing the liquid to remain still in the
liquid phase even slightly below its normal its freezing point under
some conditions) and opposite process called {\it superheating}.
There can be various mechanisms to consider, both static (surface tension,
capillarity) and kinetic \cite{Cagi85STSS,Cagi89SHSM,Gurt94TSSE,Visi86SPKC}.
The kinetic supercooling and superheating, sometimes referred as undercooling
and overheating \cite{ColSpr95PFMZ,Visi85SPPH,Visi85SSEP}, reflects the phenomenon
that melting or solidification proceed with a speed which may increase
with the difference between the actual temperature and the equilibrium
(=\,phase transformation) temperature \cite{Chal64PS}. This enrichment
(also called a {\it relaxed Stefan problem}\index{Stefan problem!relaxed})
will also bring analytical benefits especially in the context of mechanical
coupling considered in the next Section~\ref{sec-thermo}.

To involve these kinetic supercooling\,/\,heating effects,
we will now slightly modify the model by  ``relaxing'' the last inclusion in
\eqref{heat-enthalpy+} written equivalently as
$H^{-1}(\chi)\ni\theta-\theta_{\rm pt}$. First, we write more generally (but still
equivalently) as $H^{-1}(\chi)\ni \Upsilon(\theta{-}\theta_\text{\sc pt})$
with $\Upsilon$ some continuous increasing function with $\Upsilon(0)=0$.
Then, for a (typically small) relaxation time $\NU>0$, we consider
\begin{align}
  \NU\pdt\chi+H^{-1}(\chi)\ni
\Upsilon(\theta{-}\theta_\text{\sc pt})\,.
\label{relaxed}\end{align}
For $\NU=0$, we obtain exactly the Stefan model \eq{heat-enthalpy+}.
In literature, usually $\Upsilon(\theta)=\theta$.
Such a model was combined with a Cattaneo modification of the Fourier law or
generalized in \cite{ColRec02CSPP,Recu23CRSP,RecZan26AASP}.
Yet, here we consider a general $\Upsilon$ with some sublinear growth,
cf.\ \eq{ass-Upsilon} below, which will be needed for the estimate
\eq{EUL-L-est-DT-chi-disc} which is further needed
for \eq{grad-chi} and then for the strong convergence of $\chi$'s.
Even a slightly more general dynamics for $\chi$ was considered in
\cite{Recu23CRSP,RecZan26AASP,Visi01MPR}.

\begin{remark}[``Enhanced'' temperature $\theta{+}\chi$.]\label{rem-enhanced}
\upshape
  The  jointly continuous $\kappa$ from \eq{M-convex-combination} can be
  restricted on the graph of the relation $\chi\in H(\theta)$
 in $[0,+\infty)\times[0,1]$,
i.e.\ ${\rm Gr}(H):=\{(\theta,\chi);\ \chi\in H(\theta)\}$. We obtain
\begin{align}\nonumber\\[-3.em]
  \kappa\big|_{{\rm Gr}(H)}^{}(\theta,\chi)=\begin{cases}\hspace*{4em}\kappa_\text{\sc s}(\theta)&\text{if }\
  \theta<\theta_\text{\sc pt}\,,\hspace{2em}\text{\small\sf (solid)}
 \\[-.3em]
      (1{-}\chi)\kappa_\text{\sc s}(\theta_\text{\sc pt})+
\chi \kappa_\text{\sc l}(\theta_\text{\sc pt})
\!\!&\text{if }\ \theta=\theta_\text{\sc pt}
\,,\hspace*{2em}\text{\small\sf (mushy)}
 \\[-.3em]\hspace*{4em}\kappa_\text{\sc L}(\theta)&\text{if }\
 \theta>\theta_\text{\sc pt}
 \,.\hspace{2em}\text{\small\sf (fluid)}
    \end{cases}
\label{M-convex-combination+}\end{align}
When the monotone graph ${\rm Gr}(H)$ is parameterized by
$\wt\theta:=\theta+\chi$, we can also write
\begin{align}
\kappa\big|_{{\rm Gr}(H)}^{}(\wt\theta)
=\begin{cases}\hspace*{4em}\kappa_\text{\sc s}(\theta)&\text{if }\
\wt\theta<\theta_\text{\sc pt}\,,
 \\[-.1em]
      (1{-}\chi)\kappa_\text{\sc s}(\theta_\text{\sc pt})+
\chi \kappa_\text{\sc l}(\theta_\text{\sc pt})
\!\!&\text{if }\ 
\theta_\text{\sc pt}\le\wt\theta\le\theta_\text{\sc pt}{+}1
\text{ with $\chi=\wt\theta-\theta_\text{\sc pt}$}\,,
 \\[-.1em]\hspace*{4em}\kappa_\text{\sc L}(\theta)&\text{if }\
\wt\theta>\theta_\text{\sc pt}{+1}\,.
    \end{cases}
\label{M-convex-combination++}\end{align}
The chosen parameterization $\wt\theta:=\theta+\chi$ plays a role of an
artificial ``enhanced'' temperature serving for a ``continualization'' of
discontinuous coefficients, as illustrated on Figure~\ref{fig-enhanced-temperature}
below for the creep viscous modulus. This would need to consider $\chi$ formally
in the physical unit K and then $L$ in Pa/K.
\end{remark}

\section{The thermo-visco-elastodynamics in Jeffreys-Stokes rheology}\label{sec-thermo}

To present the Stefan problem in the thermomechanical context
at large strains, we briefly recall the basic kinematics
and thermodynamics of the visco-eleastodynamics in Eulerian frame.
Cf.\ the continuum-mechanics textbooks as e.g.\ \cite{GuFrAn10MTC,Mart19PCM}.
We consider a fixed bounded domain $\varOmega\in\R^3$
whose (referential) points will be denoted by $\bm X$.
The basic ingredient is the deformation $\yy:\varOmega\to\R^3$
which sends $\bm X$ to $\xx=\yy(\bm X)$ in the actually deformed domain
(but with the same boundary), so again $\xx\in\varOmega$. This
determines the Eulerian deformation gradient $\FF=\nabla\yy$. When $\yy$ evolves in
time, it determines the Eulerian velocity $\vv$ the 
and then also the {\it convective} (or material) {\it time derivative}
time derivative $\pdt{}\cdot+(\vv\Cdot\nabla)\cdot$,
denoted by $(\cdot)\!\DT{^{}}$ and to be applied to scalars or,
component-wise, to vectors or tensors. More precisely,
$\vv$ results from $\pdt{}\yy$ composed by the inverse $\yy^{-1}(t,\cdot)$
assumed to exist, so that $\vv=\vv(t,\xx)$. Also $\FF=\FF(t,\xx)$ is actually defined as
$\nabla_{\bm X}\yy$ composed by $\yy^{-1}(t,\cdot)$. By the chain-rule calculus,
we have the {\it transport equation-and-evolution for the
deformation gradient} and its determinant and its inverse as
\begin{align}\nonumber\\[-2.9em]
\DT\FF=(\nabla\vv)\FF\,, \ \ \ \ \ \ \ \
\DT{\overline{\det\FF}}=(\det\FF){\rm div}\,\vv\,,
\ \ \ \text{ and }\ \ \ \
\DT{\overline{\!\!\!\bigg(\frac1{\det\FF}\bigg)\!\!\!}}\ 
=-\frac{{\rm div}\,\vv}{\det\FF}\,.
\label{ultimate}\end{align}
The mass density (in kg/m$^3$) transport and the
``mass sparsity'' as the inverse mass density $1/\varrho$ transport write: 
\begin{align}\nonumber\\[-2.9em]
\DT\varrho=-\varrho\,{\rm div}\,\vv\ \ \ \ \text{ and }\ \ \ \ 
\DT{\overline{\!\!\bigg(\frac1\varrho\bigg)\!\!}}\
=\frac{{\rm div}\,\vv}{\varrho}\,.
\label{cont-eq+}\end{align}
The former equation in \eq{cont-eq+} called the {\it continuity equation}
equivalently writes $\pdt{}\varrho+{\rm div}(\varrho\vv)=0$ (expressing
that the conservation of mass) and  ensures the transport of the momentum
$\pp=\varrho\vv$: 
\begin{align}\label{inertial}
\pdt\pp+\text{\rm div}(\pp
{\otimes} \bm{v})=\varrho\DT\vv\,.
\end{align}

Further modelling ingredient is introducing an {\it inelastic} 
(or plastic) {\it distortion} tensor $\Fp$ in the position
of an internal variable having the interpretation of a transformation of the reference
configuration into an intermediate stress-free configuration.
A con\-ventional large-strain inelasticity is then based on the
Kr\"oner-Lee-Liu \cite{Kron60AKVE,LeeLiu67FSEP} {\it multi\-pli\-cative
decomposition}
of the deformation gradient $\FF$ to the inelastic and
{\it elastic distortion}s
\begin{align}\label{KLL}
\FF=\Fe\Fp\,\ \ \ \text{ with }\ \ \ \FF=\nabla\yy\,.
\end{align}
Applying the convective time derivative, after some algebraic manipulation
we obtain
\begin{align}\label{KLL+}
\DT{\Fe}=(\nabla\vv)\Fe-\Fe\Lp\ \ \ \text{ with }\ \ \Lp=\DT{\Fp}\Fpp^{-1}\,.
\end{align}
Furthermore, we apply the decomposition of $\Fe=J^{1/3}\Fedev$ to select
the isochoric part of the elastic distortion $\Fedev$. Using the calculus
$\DT{\Fe}=J^{1/3}\DT\Fedev+\frac13J^{-2/3}\DT J\Fedev
=J^{1/3}\DT\Fedev+\frac13J^{-2/3}({\rm div}\,\vv)\Fedev$, the kinematic
flow rule \eq{KLL+} turns into
\begin{align}\label{KLL++}
\DT{\Fedev}=\DEV(\nabla\vv)\Fedev-\Fedev\Lp\ \ \ \text{ with }\ \ \Lp=\DT{\Fp}\Fpp^{-1}\,.
\end{align}

Beside the kinematics, the further ingredient of the model is the (Helmholtz)
{\it free energy} $\psi$ depending on the elastic distortion $\Fe$ and temperature
$\theta$. This free energy $\psi=\psi(\FF,\theta)$ is considered per the
{\it actual volume} in the physical unit J/m$^3$ (not in J/kg for energy alternatively
considered in the referential frame). Possibly, $\psi$ may also depend on the phase
fraction $\chi$ as another intensive variable. The concept of stresses governed by
some potential (i.e.\ energy) is called {\it hyperelasticity}.
The leads to the conservative part of the
(actual)  {\it Cauchy stress} and the  {\it Mandel stress},
the  {\it entropy}, and the {\it heat capacity} respectively as
\begin{align}\nonumber
&\TT=\psi_{\Fe}'\!(\Fe,\theta)\Fe^\top\!\!+\psi(\Fe,\theta)\bbI\,,\ \ \ \ 
\MM=\Fe^\top\psi_{\Fe}'\!(\Fe,\theta)\,,
\\&
\eta=-\psi_\theta'(\Fe,\theta)\,,\ \text{ and }\ 
c(\Fe,\theta)=-\theta\psi_{\theta\theta}''(\Fe,\theta)
\label{stress-entropy}\end{align}
with $\bbI$ denoting the unit matrix.
In isotropic media, it is legitimate to confine on a special case when
the thermal coupling is only through the volumetric part by 
considering a special ansatz 
\begin{align}\label{special-thermo-case}
  \psi=\psi(\Fe,\theta)=
  \varphi(\Fedev)+\COUPLING(J,\theta)\ \ \text{ with }\ 
\Fedev=J^{-1/3}\Fe\ \text{ and }\  J=\det\Fe\,;
\end{align}
here $\varphi:{\rm SL}_3\to\R^+$ is assumed frame indifferent with
${\rm SL}_3:=\{F^*\in\R^{3\times3};\ \det F^*=1\}$ denoting the special linear
group. Then we obtain the stresses $\TT=\boldT(J,\Fedev,\theta)$ and
$\MM=\boldM(J,\Fedev,\theta)$ in the special forms 
\begin{subequations}\label{T-M-stress}\begin{align}\nonumber
 \boldT(J,\Fedev,\theta)&=\Big(\!J^{-1/3}\varphi'(\Fedev)
  {-}\tfrac13J^{-4/3}\varphi'(\Fedev)\Fe{\rm Cof}\Fe
  {+}\COUPLING_J'(J,\theta){\rm Cof}\Fedev\Big)\Fedev^\top\!\!
  +\big(\varphi(\Fedev){+}\COUPLING(J,\theta)\big)\bbI
  \\[-.2em]&=\DEV\big(\varphi'(\Fedev)\Fedev^\top\big)
  +\big(\varphi(\Fedev)+\COUPLING(J,\theta)+J\COUPLING_J'(J,\theta)\big)\bbI
  \ \ \ \text{ and}
\label{T-stress}\\\nonumber
\boldM(J,\Fedev,\theta)&=\Fe^\top\Big(J^{-1/3}\varphi'(\Fedev)
  -\tfrac13J^{-4/3}\varphi'(\Fedev)\Fe{\rm Cof}\Fe
  +\COUPLING_J'(J,\theta){\rm Cof}\Fe\Big)
 \\[-.2em]&=\DEV\big(\Fedev^\top\varphi'(\Fedev)\big)+J\COUPLING_J'(J,\theta)\bbI\,,
\label{M-stress}\end{align}\end{subequations}
where we used $(\det F)'={\rm Cof}\,F=JF^{-\top}$, i.e.\
the Jacobi formula and Cramer’s rule. Note that the deviatioric part
$\DEV\big(\Fedev^\top\!\varphi'(\Fedev)\big)$ of
$\boldM(J,\Fedev,\theta)$ is, in fact, temperature-independent.

The 3rd law of thermodynamics needs $\COUPLING_{J\theta}''(J,0)=0$, which
ensures that, at the absolute temperature zero,
the entropy $\eta=-\COUPLING_\theta'(J,0)$ is independent of $J$.

The further modelling issue is the choice of a particular rheological
viscoelastic model. We consider the Jeffreys (also called anti-Zener)
model, which standardly consists in the parallel Stokes model (with the
viscosity $\bbD_0$) with the Maxwell model composed from the (nonlinear)
elastic element governed by $\psi(\cdot,\theta)$ and another (here
in general nonlinear and temperature dependent) viscous element governed by
the viscoplastic potential $\zetap(\theta,\cdot)$ in series. 
In case of ice (glaciology), the conventional model is the 
the phenomenological polynomially-dependent flow rule
$M|\Lp|^{1/n-1}\Lp=\,$the driving stress, where $\Lp=\DT\Fp\Fpp^{-1}$ is
the inelastic distortion rate. For $n=1$, it would lead to a linear
creep (i.e.\ Newtonian fluid) but, for ice flow, the literature typically
considers {\it Glen}'s {\it flow} rule $n\sim3$
(leading to the specific non-Newtonian shear-thinning fluid)
and with $M=M(\theta)$. Here, we modify this
standard model by another viscosity $\bbD_1$ in parallel to the elastic
element, cf.\ Fig.~\ref{fig-rheology-Jeff-St-thermo}-left.
The driving stress  here is $\DEV\MM+\Feedev^\top\DD_1\Feedev^{-\top}$.
\begin{figure}[h]
\begin{center}
\psfrag{c}{\small$\bbC$}
\psfrag{d1}{\small$\bbD_0$}
\psfrag{C}{\small$\bbC$}
\psfrag{D1}{\small$\bbD_1$}
\psfrag{D2}{\small$\bbD_2$}
\psfrag{s}{\small$\sigma$}
\psfrag{Fe}{\small$\!\!\Fedev$}
\psfrag{Fp}{\small$\Fp$}
\psfrag{r}{\small$\varrho$}
\psfrag{C}{$\varphi'$}
\psfrag{y}{\small$\nabla\yy$}
\psfrag{D2}{$\zetap'$}
\psfrag{s}{\small${}$}
\psfrag{S0}{\small$\DEV\DD_0$}
\psfrag{S1}{\small$\!\!\!\DD_1$}
\psfrag{S2}{\small$\DEV\TT$}
\psfrag{M}{\small$\DEV\MM$}
\psfrag{q}{$\theta$}
\psfrag{q<qpt}{$\chi\sim0$}
\psfrag{q>qpt}{$\chi=0$}
\psfrag{Fe->I}{\small$\Fedev\!\stackrel{\sim}{\in}{\rm SO}_3$}
\psfrag{Fp->y}{\small$\!\Fp
  \!\!\stackrel{\sim}{\in}\!\!\frac{\nabla\yy{\rm SO}_3}{\det(\nabla\yy)^{1/3}}$}
\psfrag{S1->0}{\small$\!\!\!\!\DD_1{\sim}\,\bm0$}
\psfrag{S2->0}{\small$\DEV\TT{\sim}\,\bm0$}
\psfrag{melting}{\scriptsize\bf MELTING}
\psfrag{freezing}{\scriptsize\bf FREEZING}
\includegraphics[width=0.96\textwidth]{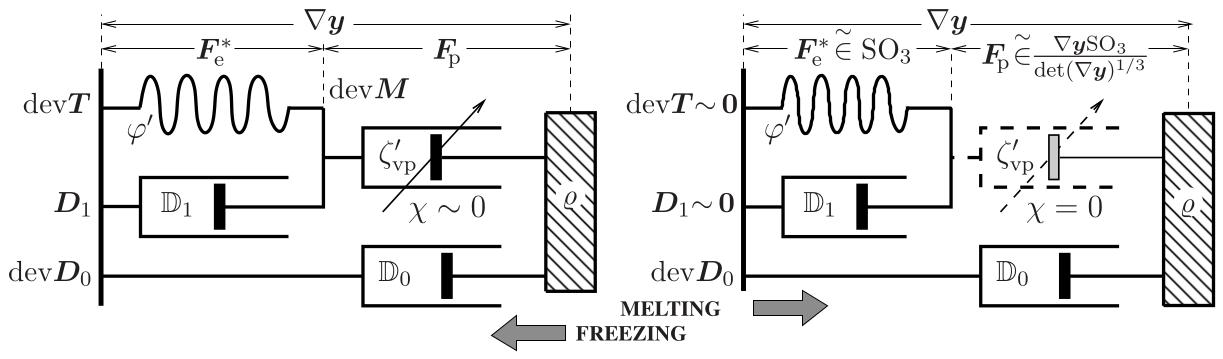}
\end{center}
\vspace*{-.6em}
\caption{{\sl A schematic 1-dimensional illustration the (nonlinear)
  Jeffreys-Stokesian rheological model in the deviatoric part
  involving the 
  phase-dependent
viscoplastic potential $\zetap(\chi,\cdot)$ in parallel with the Stokes
model involving the viscosity tensor $\bbD_0$, based on the multiplicative
decomposition \eq{KLL}. For complete melting 
for $\chi=1$, the model degenerates to Stokesian fluid
by putting $\zetap\equiv0$ which asymptotically (in the steady state)
causes $\DEV\TT=0$ and $\DD_1\equiv\bm0$ and, if $\varphi(\bbI)=0$,
also $\Fedev\in{\rm SO}_3:=\{F\in\R^{3\times3};\ F^\top F=\bbI\
\&\ \det F=1\}$.
}}
\label{fig-rheology-Jeff-St-thermo}
\end{figure}

In the linear temperature-independent case, this 4-element rheology
would be equivalent to the standard 3-element Jeffreys rheology. Yet,
it is not true if $\zetap(\theta,\cdot)$ is temperature dependent. This
temperature dependency is essential for the transition between
the Jeffreys rheology (as used for some solid-type material allowing
creep or plasticity in the deviatoric part) and the Stokes model for
fluids. Allowing  $\zetap(\theta,\cdot)$ to degenerate to zero
for $\theta>\theta_\text{\sc pt}$, we model the {\it complete melting}.
Then the model reduces to the Newtonian fluid. The role of $\bbD_1$ is
important for controlling the 
rate of
$\Fedev$, i.e.\ the deviatoric part of the elastic distortion rate
$\DT\Fe\Fee^{-1}$, denoted by $\Ld$, i.e.\ $\Ld=\DT\Fedev\Feedev^{-1}$,
even during the melting when $\zetap(\theta,\cdot)$
switches from a regular viscoplasticity to a complete degeneracy.
\begin{figure}[h]
\begin{center}
\psfrag{q}{\small $\theta$}
\psfrag{qsf}{\small $\theta_\text{\sc pt}$}
\psfrag{Gviscous}{\small $\GM=\GM(\theta)$}
\psfrag{T}{\small $\Ent\in\ENT(\theta)$}
\psfrag{TS}{\small $\Ent_\text{\sc s}$}
\psfrag{TF}{\small $\Ent_\text{\sc l}$}
\psfrag{solid elastic}{\!\footnotesize\sf\begin{minipage}[t]{10em}\hspace*{0em}solid with\\[-.3em]\hspace*{0em}dominating\\[-.3em]\hspace*{0em}elasticity (and\\[-.3em]\hspace*{0em}possible fracture
  \\[-.3em]\hspace*{0em}as in Remark~\ref{rem-fracture})
\end{minipage}}
\psfrag{creep}{\!\footnotesize\sf\begin{minipage}[t]{10em}\hspace*{0em}creep
  in solid ice\\[-.3em]\hspace*{0em}at temperatures\\[-.3em]\hspace*{0em}close to melting
\end{minipage}}
\psfrag{PT}{\!\footnotesize\sf\begin{minipage}[t]{10em}\hspace*{-1em}solid-liquid
  phase\\[-.3em]\hspace*{0em}transformation\\[-.3em]\hspace*{-.7em}(melting/solidification)
\end{minipage}}
\psfrag{temperature}{\footnotesize temperature}
\psfrag{melting}{\footnotesize melting/freezing} 
\psfrag{latent}{\footnotesize\ latent}
\psfrag{heat}{\footnotesize\sf heat $L$}
\psfrag{fluid}{\footnotesize\sf fluid (liquid)}
\psfrag{solid}{\footnotesize\sf\begin{minipage}[t]{10em}frozen phase\\[-.3em]\hspace*{2em}(ice)\end{minipage}}
\psfrag{liquid}{\footnotesize\sf\begin{minipage}[t]{10em}melted phase\\[-.3em]\hspace*{1.7em}(water)\end{minipage}}
\hspace*{-.5em}{\includegraphics[width=36em]{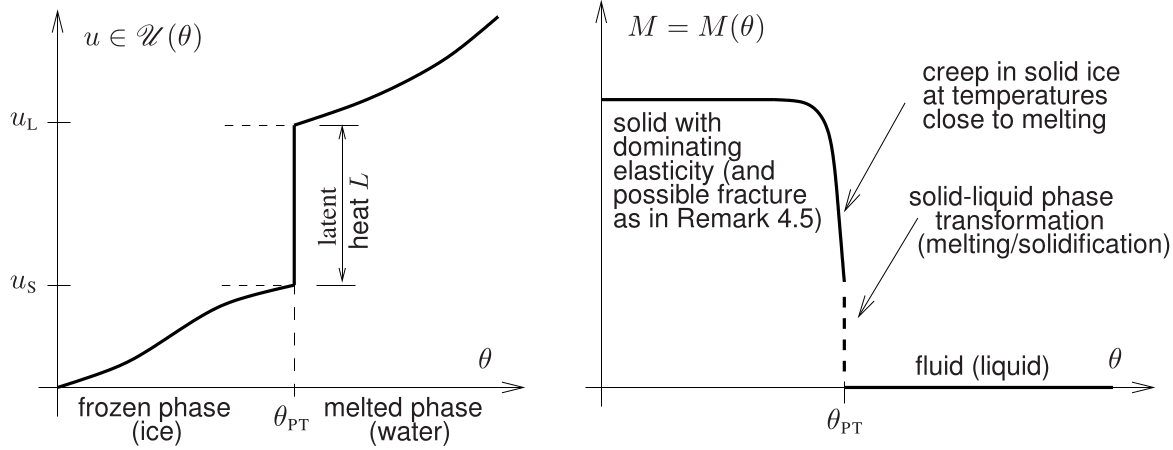}}
\end{center}
\vspace*{-1.5em}
\caption{{\sl
    A basic philosophy of the Stefan problem combined with
mechanical solid-liquid transition: temperature dependence of
    the thermal energy $\Ent=\ENT(\theta)$ in the classical
    Stefan problem (left)
    and the dependence of the 
    creep viscosity modulus $\GM$
in the non-quadratic potential
$\zetap(\theta,\cdot)=\GM(\theta)|\cdot|^{1+1/3}$ modelling a phenomenological
Glen's law for ice flow (right). As $\ENT$ is
    multivalued at the solid-liquid phase-transition temperature
    $\theta=\theta_\text{\sc pt}$ and thus the resulted heat capacity
 $\ENT'(\cdot)$ contains a Dirac measure of the magnitude $L$ 
    supported at $\theta=\theta_\text{\sc pt}$.
}}
\label{fig-Stefan}\end{figure}

In the volumetric part, there should not be any creep, so that we will use
the standard Kelvin-Voigt model. This split to the deviatoric and the volumetric
part is achieved simply by considering $\det\Fp=1$, which is granted
by ${\rm tr}\,\Lp=0$ together with the isochoric initial condition for $\Fp$.
Notably, $J=\det\Fe=\det\FF$ and $\varrho=\rhoR/J$ provided \eq{cont-eq+}
holds with the initial condition $\varrho|_{t=0}=\rhoR/J|_{t=0}$ with
$\rhoR$ a prescribed referential mass density. Thus we have $J=\rhoR/J$.

Reminding the ansatz \eq{special-thermo-case}, the internal energy
$\ENG=\ENG(J,\Fedev,\theta)$ determined by the Gibbs relation is now
additively coupled as
\begin{subequations}\label{def-of-energy}\begin{align}\nonumber
&\ENG(J,\Fedev,\theta)=\psi(J,\Fedev,\theta)-\theta\psi_\theta'(J,\Fedev,\theta)
    =\varphi(\Fedev)
    +\ENT(J,\theta)\
\\[-.3em]&\hspace{12.7em}
    \text{ with }\
\ENT(J,\theta)=\COUPLING(J,\theta)-\theta\COUPLING_\theta'(J,\theta)-\COUPLING(J,0)\,.
\intertext{Moreover, the ``thermal stress'' $\boldT(J,\Fedev,\theta)-\boldT(J,\Fedev,0)$
  reduces to a ``thermal pressure'' $\ADI$ as}
\nonumber
&\boldT(J,\Fedev,\theta)=\boldT(J,\Fedev,0)+\ADI(J,\theta)\,\bbI\ \
\\[-.5em]&\hspace{6em}
\text{ with }\
\ADI(J,\theta)=\COUPLING(J,\theta)+J\COUPLING_J'(J,\theta)-J\COUPLING_J'(J,0)-\COUPLING(J,0)\,;
\label{A-stress}
\end{align}\end{subequations}
note that $\ADI(J,\theta)$ obviously vanishes for $\theta=0$.
\begin{figure}[h]
\begin{center}
\psfrag{q}{\small $\theta$}
\psfrag{qsf}{\small $\theta_\text{\sc pt}$}
\psfrag{Gviscous}{\small $\GM=\GM(\theta{+}\chi)$}
\psfrag{TS}{\small $\!\theta_\text{\sc pt}\ $}
\psfrag{TF}{\small $\theta_\text{\sc pt}{+}1$}
\psfrag{solid}{\footnotesize\sf solid}
\psfrag{temperature}{\scriptsize\sf temperature}
\psfrag{melting}{\scriptsize\sf melting\,/\,freezing} 
\psfrag{qsf}{\small $\theta_\text{\sc pt}$}
\psfrag{liquid}{\footnotesize\sf liquid} 
\psfrag{mushy zone}{\footnotesize\sf\begin{minipage}[t]{7em}\hspace*{0em}mushy\\[-.3em]\hspace*{0.4em}zone\end{minipage}}
\psfrag{latent}{\scriptsize\sf latent}
\psfrag{heat}{\scriptsize\sf heat $\varrho\ell$}
\psfrag{fluid}{\scriptsize\sf fluid}
\psfrag{w}{$\theta{+}\chi$}
\hspace*{.5em}{\includegraphics[width=22em]{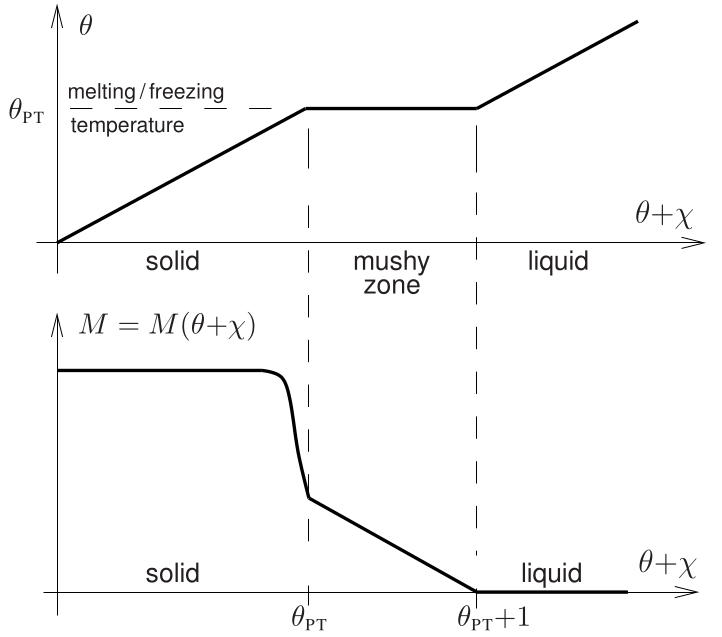}}
\end{center}
\vspace*{-1.5em}
\caption{{\sl
    The illustration of the ``continualization'' of the
    discontinuous viscous creep modulus $\GM$ as
    in Fig.\,\ref{fig-Stefan}-right by introducing the
    ``enhanced'' temperature $\theta{+}\chi$ as in
    Remark~\ref{rem-enhanced}.
}}
\label{fig-enhanced-temperature}\end{figure}

Again we adopt the natural ansatz \eq{special-thermo-case} now
merged with \eq{split-phi} considered for $L=\varrho\ell$: 
\begin{align}
\psi(J,\Fedev,\theta)=\varphi(\Fedev)+\wt\COUPLING(J,\theta)-\begin{cases}\qquad 0
&\text{if }\ \theta\le\theta_\text{\sc pt}\,,
\\[-.1em]\
\varrho\ell\,\big(\,\theta/\theta_\text{\sc pt}-1\big)
&\text{if }\ \theta>\theta_\text{\sc pt}\,.
    \end{cases}
\label{special-thermo-case+}\end{align}
Here $\ell$ is the latent heat in J/m$^3$.
Noteworthy, $\ell\chi$ itself with the physical dimension J/m$^3$ is an intensive variable
while $\varrho\ell\chi$ in J/kg is an extensive variable, so that we can sum
correctly two extensive variables $\varrho\ell\chi$ and $\wt\ENT(J,\theta)$ with 
$\wt\ENT(J,\theta)=\wt\COUPLING(J,\theta)-\theta\wt\COUPLING_\theta'(J,\theta)
-\wt\COUPLING(J,0)$. This gives the full (set-valued) {\it thermal} (part of the
internal) {\it energy}
\begin{align}
  \ENT(J,\theta)=\wt\ENT(J,\theta)+\varrho\ell\chi\ \ \ \text{ with }\ \
  \chi\in H(\theta{-}\theta_\text{\sc pt})\,.
\label{therma-engr}\end{align}
Note that 
$\ENT(J,0)=0$.

The specific entropy $\eta=-\COUPLING_\theta'(J,\theta)$ is an extensive variable
(in JK$^{-1}$m$^{-3}$) and its evolution-and-transport is governed by the entropy equation
\begin{align}\pdt\eta+{\rm div}\big(\vv\,\eta\big)
=\frac{\xi-{\rm div}\,{\bm j}}\theta\,,
\label{EUL-L-entropy-eq}\end{align}
where $\xi$ is the dissipation rate as specified below in 
\eq{EUL-L-Jeff-St6-thermo} and ${\bm j}=-\kappa(\theta,\chi)\nabla\theta$
is the heat flux governed by the Fourier law with the thermal
conductivity $\kappa=\kappa(\theta,\chi)$. Substituting
$\eta$ from \eq{stress-entropy} into \eq{EUL-L-entropy-eq},
i.e.\ here $\eta=-\COUPLING_\theta'(J,\theta)$, it leads to
$-\COUPLING_{\theta\theta}''(J,\theta)\DT\theta
  =\COUPLING_{J\theta}''(J,\theta)\DT J
  +({\rm div}\,\vv)\COUPLING_\theta'(J,\theta)
  +(\xi+{\rm div}(\kappa(\theta,\chi)\nabla\theta))/\theta$, i.e.\
  the heat transfer equation
\begin{align}
\!\!\!c(J,\theta)\DT\theta-{\rm div}\big(\kappa(\theta,\chi)\nabla\theta\big)
  =\theta\COUPLING_{J\theta}''(J,\theta)\DT J
  +\theta\,\COUPLING_\theta'(J,\theta)\,{\rm div}\,\vv
  +\xi
  \,,\!
\end{align}
where $c(J,\theta)=-\theta\COUPLING_{\theta\theta}''(J,\theta)$,
cf.\ \eq{stress-entropy}. This leads to the equation for the
thermal internal energy $\Ent=\ENT(J,\theta)$ as
\begin{align}\nonumber
  \pdt u+{\rm div}(u\vv)=\DT u+({\rm div}\,\vv)\,u
  &=\xi
  -{\rm div}(\kappa(\theta,\chi)\nabla\theta)+
  \big(\boldT(J,\Fedev,\theta)-\boldT(J,\Fedev,0)\big)\Colon\strain(\vv)
\\[-.5em]&=\xi
  -{\rm div}(\kappa(\theta,\chi)\nabla\theta)+\ADI(J,\theta){\rm div}\,\vv\,.
\end{align}

Involving still the kinetic overheating/undercooling by relaxing 
the inclusion in \eq{therma-engr} as in \eq{relaxed}, we obtain
the following system for $(\varrho,\vv,\Lp,\Fedev,\theta,\chi)$:
\begin{subequations}\label{EUL-L-Jeff-St-thermo}
\begin{align}\label{EUL-L-Jeff-St0-thermo}
&\DT\varrho=-\,({\rm div}\,\vv)\,\varrho\,,
\\[-.4em]&
\!\!\varrho\DT\vv={\rm div}\big(
\boldT(\rhoR/\varrho,\Fedev,\theta){+}\DD_1{+}\DD_0\big)
+\varrho\GRAVITY\,,
\label{EUL-L-Jeff-St1-thermo}
\\[-.4em]&\label{EUL-L-Jeff-St2-thermo}
\big[\zetap(\theta,\chi,\cdot)\big]'(\Lp)={\rm dev}
\big(\Feedev^\top\varphi'(\Fedev)\big)+\Feedev^\top\DD_1\Feedev^{-\top}\,,
\\[-.4em]&\label{EUL-L-Jeff-St4-thermo}
\DT{\Fedev}=\DEV(\nabla\vv)\Fedev-\Fedev\Lp\,,
\\[-.4em]\nonumber
&\DT\Ent={\rm div}\big(\kappa(\theta,\chi)\nabla\theta\big)
+\big(\ADI(\rhoR/\varrho,\theta){-}\Ent\big)\,{\rm div}\,\vv
+\xi\big(\theta,\chi;\vv,\Ld,\Lp\big)\,,
\\[-.3em]&
\hspace{2em}\text{where }\ \Ent=
\wt\ENT(\rhoR/\varrho,\theta)+\varrho\ell\chi\
\ \text{with }\
\wt\ENT(J,\theta):=\wt\COUPLING(J,\theta)-\theta\,\wt\COUPLING_\theta'(J,\theta)
-\wt\COUPLING(J,0)\,,\!
\label{EUL-L-Jeff-St5-thermo}
\\[-.2em]&\NU\,\DT\chi+H^{-1}(\chi)\ni
\Upsilon(\theta{-}\theta_\text{\sc pt})\,
\label{EUL-L-Jeff-St3-thermo}
\intertext{with $\boldT$ from \eq{T-stress} and $\ADI$ from \eq{A-stress} and with $\GRAVITY$ a prescribed acceleration (gravity).
The dissipative contributions to the Cauchy stress in
\eq{EUL-L-Jeff-St1-thermo} are}
&\DD_0=\bbD_0^{}\strain(\vv)\ \ \text{ and }\ 
\ \DD_1=\bbD_1^{}\Ld\ \ \text{ for }\ \ \Ld=\DT\Fedev\Feedev^{-1},
\intertext{and the corresponding dissipation rate
$\xi(\theta,\chi;\vv,\Ld,\Lp)$ in \eq{EUL-L-Jeff-St5-thermo} is}
&\xi\big(\theta,\chi;\vv,\Ld,\Lp\big)=
\bbD_0\strain(\vv)\Colon\strain(\vv)+\bbD_1\Ld\Colon\Ld+
\big[\zetap(\theta,\chi,\cdot)\big]'(\Lp)\Colon\Lp\,.
\label{EUL-L-Jeff-St6-thermo}
\end{align}\end{subequations}

Let us note that we now have naturally modified \eq{relaxed} by replacing the
partial time derivative by the convective time derivative in
\eq{EUL-L-Jeff-St3-thermo}, which is related with the attribute of
the phase-fraction variable $\chi$ as an intensive variable taking values
in $[0,1]$.

The system \eq{EUL-L-Jeff-St-thermo} is to be comleted by 
the boundary conditions the momentum and the heat equations.
As a usual simplification in the Eulerian frame, a fixed domain
(denoted by $\varOmega$) is to be considered with the
zero normal velocity prescribed. Thus, one can consider
\begin{align}
\nn\Cdot\vv=0\,,\ \ \ \ \ 
\big[(\boldT(\rhoR/\varrho,\Fedev,\theta){+}\DD_1{+}\DD_0)\nn\big]_\text{\sc t}^{}=\bm 0\,,\ \ 
\ \text{ and }\ \ \ \nn\Cdot\nabla\theta=0\,,
\label{basic-BC}\end{align}
where $[\cdot]_\text{\sc t}$ is the tangential component of the traction
vector, i.e.\ $[\tt]_\text{\sc t}=\tt-(\tt\Cdot\nn)\nn$.

The form of the viscosity contribution to the Mandel stress $\MM$ in
\eq{EUL-L-Jeff-St2-thermo} is motivated by the 
energetics behind \eq{EUL-L-Jeff-St-thermo} which can be seen by testing
\eq{EUL-L-Jeff-St1-thermo} by $\vv$ and \eq{EUL-L-Jeff-St2-thermo}
by $\Lp$.
Here, abbreviating $\TT=\boldT(J,\Fedev,0)=\DEV(\varphi'(\Fedev)\Feedev^\top)
+\varphi(\Fedev)\bbI$, we use
\begin{align}\nonumber
\!\!\int_\varOmega&\!{\rm div}\TT\Cdot\vv\,\d\xx
=\!\int_\varGamma\!\vv\Cdot\TT\nn\,\d S
-\!\!\int_\varOmega\!\DEV\big(\varphi'(\Fedev)\Feedev^\top\big)\Colon\nabla\vv
+\varphi(\Fedev){\rm div}\,\vv\,\d\xx
\\&\nonumber
=\!\int_\varGamma\!\vv\Cdot\TT\nn\,\d S
-\!\!\int_\varOmega\!\varphi'(\Fedev)\Colon\big(\DEV(\nabla\vv)\Fedev\big)+\varphi(\Fedev){\rm div}\,\vv\,\d\xx
\\&\nonumber
=\!\int_\varGamma\!\vv\Cdot\TT\nn\,\d S-\!\!\int_\varOmega\!
\varphi'(\Fedev)\Colon\big(\DT\Fedev\!{+}\Fedev\Lp\big)+\varphi(\Fedev){\rm div}\,\vv\,\d\xx
\\&\nonumber
=\!\int_\varGamma\!\vv\Cdot\TT\nn\,\d S-\!\!\int_\varOmega\!\varphi'(\Fedev)\Colon\DT\Fedev
+\Feedev^\top\varphi'(\Fedev)\Colon\Lp+\varphi(\Fedev){\rm div}\,\vv\,\d\xx
\\&\nonumber
=\!\int_\varGamma\!\vv\Cdot\TT\nn\,\d S-\!\int_\varOmega\!
\varphi'(\Fedev)\Colon\pdt{\Fedev}+\Feedev^\top\varphi'(\Fedev)\Colon\Lp\!
+\varphi'(\Fedev)\Colon(\vv\Cdot\nabla)\Fedev\!
  +\varphi(\Fedev){\rm div}\,\vv
  \,\d\xx
\\[-.0em]&
=\!\int_\varGamma\!\vv\Cdot\TT\nn\,\d S-\frac{\d}{\d t}\!\int_\varOmega\!
\varphi(\Fedev)\,\d\xx
-\!\int_\varOmega\!\big[\zetap(\theta,\cdot)\big]'(\Lp)\Colon\Lp\,\d\xx\,.
\label{Euler-hypoplast-test-momentum}\end{align}
Moreover, for merging the resulted terms
$(\bbD_1\DT{\Fedev}\Feedev^{-1})\Colon\nabla\vv
$ and ${\rm dev}(\Feedev^\top(\bbD_1\Ld)\Feedev^{-\top})\Colon\Lp
$, we use the calculus
\begin{align}\nonumber
\DD_1\Colon\nabla\vv-{\rm dev}&(\Fetop(\DD_1
\Fee^{-\top}))\Colon\Lp
\!\!\!\stackrel{\eq{EUL-L-Jeff-St1-thermo}}{=}\!
(\bbD_1\DT{\Fe}\Fee^{-1})\Colon\nabla\vv-(\Fetop(\bbD_1\DT{\Fe}\Fee^{-1}\Fee^{-\top}))\Colon\Lp
\\\nonumber
&
=(\bbD_1\Ld\Feedev^{-\top})\Colon(\nabla\vv)\Fedev
-(\bbD_1\Ld\Feedev^{-\top})\Colon(\Fedev\Lp)
\\&
\!\!\stackrel{\eq{EUL-L-Jeff-St3-thermo}}{=}\!(\bbD_1\Ld\Feedev^{-\top})\Colon\DT{\Fedev}
=
(\bbD_1\Ld)\Colon(\DT{\Fedev}\Feedev^{-1})=(\bbD_1\Ld)\Colon\Ld\,.
\label{EUL-L-Jeff-St-calcul}\end{align}
Here we used also the algebra $A\Colon (BC)=(B^\top A)\Colon C=(AC^\top)\Colon B$
for any three square matrices $A$, $B$, $C$.

Thus we obtain the expected energy-dissipation balance
\begin{align}
\!\frac{\d}{\d t}\int_\varOmega\frac\varrho2|\vv|^2\!
  +\varphi(\Fedev)+\wt\COUPLING(J,0)\,\d\xx+\!\int_\varOmega\!\xi(\chi;\vv,\Ld,\Lp)\,\d\xx
=\!\int_\varOmega\!\varrho\GRAVITY\Cdot\vv-\ADI(J,\theta)\,{\rm div}\,\vv\,\d\xx\!
\label{EUL-L-energy-Jeff-St}
\end{align}
with $J=\rhoR/\varrho$.
Besides, adding \eq{EUL-L-Jeff-St5-thermo} tested by 1, 
we obtain the total energy balance
\begin{align}
\frac{\d}{\d t}\int_\varOmega\frac\varrho2|\vv|^2
+\!\!\!\!\!\lineunder{\varphi(\Fedev)+\wt\COUPLING(J,0)+\wt\ENT(
J,\theta)}{$=:\wt\ENG(J,\Fedev,\theta)$}\!\!\!\!\!+\varrho\ell\chi\,\d\xx
=\int_\varOmega\varrho\GRAVITY\Cdot\vv\,\d\xx
\,.\label{EUL-L-energy-Jeff-St-thermo}
\end{align}
It should be noted that the relaxed evolution equation
\eq{EUL-L-Jeff-St3-thermo} determines phenomenologically the
overheating/undercooling effects completely out of the energetics.

\begin{remark}[An alternative form of the ``Stefan'' term in
\eq{special-thermo-case+}.]\upshape
The last term in \eq{special-thermo-case+} can equivalently use
$\rhoR/J$ instead of $\varrho$ with $J=\det\Fe$ and $\rhoR$ a given
``referential'' mass density. This variant, leading to write
\eq{special-thermo-case} with $\COUPLING(J,\theta)=\wt\COUPLING(J,\theta)
-\rhoR\ell\chi(\theta/\theta_\text{\sc pt}{-}1)^+/J$ with $\wt\COUPLING$ as
in \eq{special-thermo-case+}, is
conceptually more consistent since $\varrho$ does not explicitly
occur in the free energy and clearly reveals that this term
indeed does not influence the stress. Indeed, realizing that
$(1/\!\det\Fe)'=-(1/\!\det\Fe^2)\det'\Fe=-(1/\!\det\Fe^2){\rm Cof}\Fe$
so that $(1/\!\det\Fe)'\Fe^\top=-\bbI/\!\det\Fe$ due to
of the Jacobi formula 
${\rm Cof}\,A=\det'(A)$ and
the Cramer rule 
$A^{-1}\!={{\rm Cof}\,A^\top}\!/{\det A}$,
considering now
$\tilde\psi(\Fe,\theta)=\rhoR\ell(\theta/\theta_\text{\sc pt}{-}1)/\!\det\Fe$,
the corresponding contribution to the Cauchy stress
$$
\tilde\psi_{\Fe}'(\Fe,\theta)\Fe^\top+\tilde\psi(\Fe,\theta)\bbI
=\rhoR\ell\Big(1{-}\frac\theta{\theta_\text{\sc pt}}\Big)
\frac{({\rm Cof}\Fe)\Fe^\top}{\det\Fe^2}
+\rhoR\ell\Big(\frac\theta{\theta_\text{\sc pt}}{-}1\Big)\frac{1}{\det\Fe}\bbI=0
$$
and similarly also the contribution to the Mandel stress is zero.
\end{remark}

\section{The analysis of the model in the multipolar variant}\label{sec-anal}

We will prove existence of (suitably defined) weak solutions rather constructively
by a suitable time (semi)discretization combined with a certain truncation.
This will devise a computationally implementable conceptual strategy (up to
Remark~\ref{rem-full-discret} below), justified by a numerical stability and 
convergence of the approximate solutions thus obtained.
For simplicity, we restrict the model on $\zetap=\zetap(\chi,\cdot)$
and $\kappa=\kappa(\chi)$, although the more general case
$\zetap=\zetap(\theta,\chi,\cdot)$ and $\kappa=\kappa(\theta,\chi)$ in
Section~\ref{sec-thermo} would be analytically a simple extension.

However, it is generally recognized that any rigorous analysis of the
geometrically (and also materially) nonlinear models devised in this
chapter is problematic. Avoiding some very weak solution concepts
and correspondingly weak results and to facilitate a rigorous analysis using
the conventional weak-solution concepts, it is seems inevitable to involve
some gradient theories into the model. Here, when using the model in
the Eulerian frame, it is common to augment the dissipative part rather than
the conservative part (as common in the referential Lagrangian frame).

Here we apply a higher-order viscosity to the dampers 
$\bbD_0$ and $\bbD_1$ while the viscoplastic damper
$\zetap$ in Figure~\ref{fig-rheology-Jeff-St-thermo}, which
is assumed to vanish for $\theta>\theta_\text{\sc pt}$, remains
unchanged not to corrupt the possibility of complete degeneracy
due to complete melting. More specifically, we modify the model
\eq{EUL-L-Jeff-St-thermo} by involving the gradient theory
for dissipative stresses 
both $\DD_0$ and $\DD_1$. In the former case, we expand
$\DD_0=\bbD_0\strain(\vv)$ by the term
$-{\rm div}(\HYPER_0|\nabla\strain(\vv)|^{p-2}\nabla\strain(\vv))$ for some
$p>3$. In the latter case, we expand $\DD_1=\bbD_1\Ld$ by the term
$-{\rm div}(\HYPER_1|\nabla\Ld|^{q-2}\nabla\Ld)$
for some $q>3$. As a result, this expands the system 
(\ref{EUL-L-Jeff-St-thermo}a-f)
considered now with $\xi(\chi;\vv,\Ld,\Lp)$ as
\begin{subequations}\label{EUL-L-Jeff-St6-thermo-hyper}\begin{align}
&\DD_0=\bbD_0^{}\strain(\vv)
 -{\rm div}\big(\HYPER_0|\nabla\strain(\vv)|^{p-2}\nabla\strain(\vv)\big)\,,
\\[-.3em]&\label{EUL-L-Jeff-St6-thermo-hyper-D1}
\DD_1=\bbD_1^{}\Ld-{\rm div}\big(\HYPER_1|\nabla\Ld|^{q-2}\nabla\Ld\big)
\ \ \text{ for }\ \ \Ld=\DT\Fedev\Feedev^{-1},\ \ \text{ and }
\\&\nonumber
\xi(\chi;\vv,\Ld,\Lp)=
\bbD_0\strain(\vv)\Colon\strain(\vv)+\bbD_1\Ld\Colon\Ld
\\[-.2em]&\hspace{6.3em}
+\big[\zetap(\chi,\cdot)\big]'(\Lp)\Colon\Lp
+\HYPER_0|\nabla\strain(\vv)|^p+\HYPER_1|\nabla\Ld|^q\,.
\label{EUL-L-Jeff-St6-thermo-hyper-xi}
\end{align}\end{subequations}

As a result, recalling \eq{special-thermo-case+}, we consider the
system \eq{EUL-L-Jeff-St-thermo}
in terms of the momentum $\pp=\varrho\vv$ 
by using \eq{inertial} as:
\begin{subequations}\label{EUL-L-Jeff-St-thermo-grad}
\begin{align}\label{EUL-L-Jeff-St0-thermo-grad}
&\!\!\pdt\varrho=-\,{\rm div}\,\pp\ \ \text{ with }\ \pp=\varrho\vv
\,,
\\[-.0em]&
\!\!
\pdt\pp=\nabla\ADI(\rhoR/\varrho,\theta)+{\rm div}\Big(
\boldT(\rhoR/\varrho,\Fedev,0){+}\DD_1{+}\DD_0
{+}\pp{\otimes}\vv\Big)+\varrho\GRAVITY\,,
\label{EUL-L-Jeff-St1-thermo-grad}
\\[-.4em]&\label{EUL-L-Jeff-St2-thermo-grad}
\big[\zetap(\chi,\cdot)\big]'(\Lp)={\rm dev}
\big(\Feedev^\top\varphi'(\Fedev)\big)+\Feedev^\top\DD_1\Feedev^{-\top}\,,
\\[-.1em]&\label{EUL-L-Jeff-St4-thermo-grad}
\pdt{\Fedev}=\Ld\Fedev-(\vv\Cdot\nabla)\Fedev\ \ \ \text{ with }\ 
\Ld={\rm dev}(\nabla\vv)-\Fedev\Lp\Feedev^{-1},
\\[-.1em]\nonumber
&\pdt\Ent={\rm div}\Big(\kappa(\chi)\nabla\theta
-\Ent\vv\Big)
+\ADI(\rhoR/\varrho,\theta)\,{\rm div}\,\vv
+\xi\big(\chi;\vv,\Ld,\Lp\big)\,,
\\[-.5em]&
\hspace{4em}\text{where }\ \Ent=
\wt\ENT(\det\Fe,\theta)+\varrho\ell\chi\
\ \text{with }\
\wt\ENT(J,\theta):=\wt\COUPLING(J,\theta)-\theta\,\wt\COUPLING_\theta'(J,\theta)\,,
\label{EUL-L-Jeff-St5-thermo-grad}
\\[-.2em]&\pdt\chi+H^{-1}(\chi)\ni
\NU^{-1}\Upsilon
(\theta{-}\theta_\text{\sc pt})-\vv\Cdot\nabla\chi\,
\label{EUL-L-Jeff-St3-thermo-grad}
\end{align}\end{subequations}
with 
$\DD_0$, $\DD_1$, and $\xi(\chi;\vv,\Ld,\Lp)$ from
\eq{EUL-L-Jeff-St6-thermo-hyper} and with $\ADI$ from \eq{A-stress}.

We accompany the system \eq{EUL-L-Jeff-St-thermo-grad} with
\eq{EUL-L-Jeff-St6-thermo-hyper} by the boundary conditions enhancing
\eq{basic-BC} as
\begin{subequations}\label{enhanced-BC}\begin{align}
&\nn\Cdot\vv=0\,,\ \ \ \ \ 
\big[(
  \boldT(\rhoR/\varrho,\Fedev,\theta){+}\DD_1{+}\DD_0)\nn
+\divS(\mathfrak{H}\nn)\big]_\text{\sc t}^{}=\bm 0\,,\ \ \
\\&\mathfrak{H}\Colon(\nn{\otimes}\nn)=\bm0\,,\ \ \ \ 
(\nn\Cdot\nabla)\Ld=\bm0\,,\ \ 
\ \text{ and }\ \ \ \nn\Cdot\nabla\theta=0\,,
\end{align}\end{subequations}
where $\divS$ is the 2-dimensional surface divergence defined as
$\divS={\rm tr}(\nablaS)$ with
$\nablaS v=\nabla v-\frac{\partial v}{\partial\nn}\nn$
with ${\rm tr}(\cdot)$ denoting the trace of a
2${\times}$2-matrix and $\nablaS v$ denoting the surface gradient of $v$,
and where the so-called hyperstress is
$\mathfrak{H}=\HYPER_0|\nabla\strain(\vv)|^{p-2}\nabla\strain(\vv)$.
Moreover, we consider the initial-value problem with
the initial conditions for (\ref{EUL-L-Jeff-St-thermo-grad}a,b,d-f) with
\begin{align}
\varrho|_{t=0}=\rhoR/\!\det\Fezero\,,\ \ \ \ 
\Fedev|_{t=0}=\Fezero/\!\det{\Fezero}^{\!\!1/3},\ \ \ \ 
\theta|_{t=0}=\theta_0\,,\ \ \text{ and }\ \ \chi|_{t=0}=\chi_0\,.
\label{EUL-L-PT-IC}\end{align}

The energetics again leads to the energy-dissipation balance
\eq{EUL-L-energy-Jeff-St} now with $\xi$ from
\eq{EUL-L-Jeff-St6-thermo-hyper-xi}. This is based also on the
calculus \eq{EUL-L-Jeff-St-calcul} here enhanced as
\begin{align}\nonumber
&\DD_1\Colon\nabla\vv-{\rm dev}(\Feedev^\top(\DD_1
\Feedev^{-\top}))\Colon\Lp
=\Big(\bbD_1\Ld
-{\rm div}(\HYPER_1|\nabla\Ld|^{q-2}\nabla\Ld)
\Big)\Colon\nabla\vv
\\[-.4em]\nonumber
&\hspace{15.5em}-\Big(\Feedev^\top\Big(\bbD_1\Ld
-{\rm div}\big(\HYPER_1|\nabla\Ld|^{q-2}\nabla\Ld\big)\Big)\Feedev^{-\top}\Big)\Colon\Lp
\\[-.1em]\nonumber
&=
\Big(\bbD_1\Ld\Feedev^{-\top}\!\!-{\rm div}(\HYPER_1|\nabla\Ld|^{q-2}\nabla\Ld)\Feedev^{-\top}\Big)\Colon(\nabla\vv)\Fedev
\\[-.4em]\nonumber
&\hspace{15.5em}-\Big(\Big(\bbD_1\Ld
-{\rm div}(\HYPER_1|\nabla\Ld|^{q-2}\nabla\Ld)\Big)\Feedev^{-\top}\Big)\Colon(\Fedev\Lp)
\\[-.3em]&\nonumber
=
\Big(\bbD_1\Ld\Feedev^{-\top}\!\!-{\rm div}\big(\HYPER_1|\nabla\Ld|^{q-2}\nabla\Ld\big)
\Feedev^{-\top}\Big)\Colon\DT{\Fedev}\!
\\[-.1em]&=
\Big(\bbD_1\Ld\!-{\rm div}(\HYPER_1|\nabla\Ld|^{q-2}\nabla\Ld)\Big)\Colon(\DT{\Fedev}\Feedev^{-1})
=\Big(\bbD_1\Ld\!-{\rm div}\big(\HYPER_1|\nabla\Ld|^{q-2}\nabla\Ld)\Big)
\Colon\Ld.\!
\label{EUL-L-Jeff-St-calcul+}\end{align}
Moreover, adding \eq{EUL-L-Jeff-St5-thermo-grad} tested by 1, we
obtain the total energy balance \eq{EUL-L-energy-Jeff-St-thermo}.

We cast an elliptic equation for
$\Fedev$ which can serve for a weak formulation. To this aim,
we avoid the analytically problematic multiplication of $\Fetop$ and $\Fee^{-\top}$
in \eq{EUL-L-Jeff-St2-thermo-grad} by $\DD_1$ which is generally a
distribution due to the gradient term ${\rm div}(\HYPER_1|\nabla\Ld|^{q-2}\nabla\Ld)$.
Instead, we write \eq{EUL-L-Jeff-St2-thermo-grad} in the form
\begin{align}\nonumber
{\rm div}\big(\HYPER_1|\nabla\Ld|^{q-2}\nabla\Ld\big)-\bbD_1\Ld\!
+\Feedev^{-\top}\big[\zetap(\chi,\cdot)\big]'
\Big(\Feedev^{-1}\DEV(\nabla\vv{-}\Ld)\Fedev\Big)\Feedev^{\top}\ 
\\[-.2em]=\DEV\big(\varphi'(\Fedev)\Feedev^\top\big).\!
\label{EUL-L-PT-Le-equation}\end{align}
Moreover, we use the Green formula
$\int_\varOmega\vv{\cdot}\nabla\chi(\widetilde\chi{-}\chi)\,\d\xx=
\int_\varOmega\frac12({\rm div}\,\vv)\chi^2-\chi{\rm div}(\vv\widetilde\chi)\,\d\xx$
for the weak formulation (as an inequality) of the inclusion
\eq{EUL-L-Jeff-St3-thermo-grad}. 

\begin{definition}[Weak formulation of
\eq{EUL-L-Jeff-St6-thermo-hyper}--\eq{EUL-L-PT-IC}]\label{def-thermo-Ch5-PT}
The seven-tuple $(\varrho,\vv,\Lp,\Ld,\Fedev,\theta,\chi)$ with
$\varrho\in L^\infty(I{\times}\varOmega)\cap W^{1,1}(I{\times}\varOmega)$
with $\inf_{I\times\varOmega}\varrho>0$, $\vv\in 
L^p(I;W^{2,p}(\varOmega;\R^3))$,
$\Ld\in L^1(I;W^{1,q}(\varOmega;\Rtr))$,
$\Fedev\in 
W^{1,1}(I{\times}\varOmega;\R^{3\times3})$ with
$\det\Fedev=1$, 
 $\theta\in L^1(I;W^{1,1}(\varOmega))$, and
 $\chi\in L^\infty(I{\times}\varOmega)$ with $0\le\chi\le1$ a.e.,
such that $\ADI(\rhoR/\varrho,\theta)\in L^{p'}(I;L^1(\varOmega))$,
and $\ENT(\rhoR/\varrho,\theta)\in L^{p'}(I;L^1(\varOmega))$
is called a weak solution to
the system \eq{EUL-L-Jeff-St6-thermo-hyper} with the boundary
conditions \eq{enhanced-BC} and the initial conditions \eq{EUL-L-PT-IC} if
\begin{subequations}\label{def-thermo-Ch5-momentum}\begin{align}
&\nonumber
\!\int_0^T\!\!\!\int_\varOmega\bigg(\Big(\boldT(\frac{\rhoR}\varrho,\Fedev,0)
+\bbD_0\strain(\vv)+\bbD_1\Ld-\varrho\vv{\otimes}\vv\Big){:}\strain(\widetilde\vv)
-\varrho\vv{\cdot}\pdt{\widetilde\vv}
\\[-.3em]&\nonumber\hspace{1em}
+\ADI(\frac{\rhoR}\varrho,\theta)
\,{\rm div}\,\vv+\Big(\HYPER_0|\nabla\strain(\vv)|^{p-2}\nabla\strain(\vv)
+\HYPER_1|\nabla\Ld|^{q-2}\nabla\Ld\Big)\Vdots\nabla^2\widetilde\vv\bigg)\,\d\xx\d t
\\[-.6em]&\hspace{17.5em}
=\!\int_0^T\!\!\!\int_\varOmega\varrho\GRAVITY{\cdot}\widetilde\vv\,\d\xx\d t
+\!\int_\varOmega\!\varrho_0\vv_0{\cdot}\widetilde\vv(0)\,\d\xx
\label{def-thermo-Ch5-momentum1}
\intertext{holds for any $\widetilde\vv$ with $\widetilde\vv{\cdot}\nn={\bm0}$
on $I{\times}\varGamma$ and $\widetilde\vv(T)=0$, and with
$\Ld$ satisfying}\nonumber
&\!\int_\varOmega\!
\HYPER_1|\nabla\Ld|^{q-2}\nabla\Ld\Vdots\nabla\wt{\bm L}
+(\bbD_1\Ld{-}\DD_1)\Colon\wt{\bm L}\,\d\xx=0\!
\\[-.4em]&\hspace{2em}{\rm with }\ \DD_1=\Feedev^{-\top}\big[\zetap(\chi,\cdot)\big]'
\Big(\Feedev^{-1}\DEV(\nabla\vv{-}\Ld)\Fedev\Big)\Feedev^{\top}\!\!-\DEV\big(\varphi'(\Fedev)\Feedev^\top\big)
\intertext{for any $\wt{\bm L}\in W^{1,q}(\varOmega;\Rdev)$ with
  $(\nn\Cdot\nabla)\wt{\bm L}=\bm0$ and for a.a.\ $t\in I$, and}
\nonumber
&\!\!\!\!\int_0^T\!\!\!\int_\varOmega\!\Ent\pdt{\wt\theta}
+\Big(\ADI\big(\frac{\rhoR}\varrho,\theta\big)\,{\rm div}\,\vv
+\xi\big(\chi;\vv,\Ld,\Lp\big)\Big)\,\wt\theta+
\Big(\kappa(\chi)\nabla\theta{+}\Ent\vv\Big)\Cdot\nabla\wt\theta\,\d\xx\d t
\\[-.3em]&\hspace{4.5em}
+\!\!\int_\varOmega\!\!\big(\wt\ENT(J_0,\theta_0){+}\varrho_0\ell\chi_0\big)
\,\wt\theta(0)\,\d\xx
=0\ \ \ \text{ with }\ \ \Ent=
\wt\ENT\big(\frac{\rhoR}\varrho,\theta\big)+\varrho\ell\chi
\label{def-thermo-Ch5-momentum3}
\intertext{with 
$\xi(\chi;\vv,\Ld,\Lp)$ from \eq{EUL-L-Jeff-St6-thermo-hyper}
holds for any $\wt\theta$ smooth with $\wt\theta(T)=0$ and
$\nn{\cdot}\nabla\wt\theta=0$ on $I{\times}\varGamma$, and eventually}
&\nonumber
\int_0^T\!\!\!\int_\varOmega\!
\frac{\Upsilon(\theta{-}\theta_\text{\sc pt})}{\NU}
(\widetilde\chi-\chi)+\chi\pdt{\widetilde\chi}
-\Frac12({\rm div}\,\vv)\chi^2+\chi{\rm div}(\vv\widetilde\chi)\,\d\xx\d t
\\[-.5em]&\label{def-thermo-Ch4-chi-rule}\hspace*{8em}
+\int_\varOmega\Frac12\chi_0^2-\chi_0^{}\chi(0)\,\d\xx
\ge\int_\varOmega\!\Frac12\chi(T)^2-\chi(T)\widetilde\chi(T)\,\d\xx
\end{align}\end{subequations}
holds for any $\widetilde\chi\in W^{1,\infty}(I{\times}\varOmega)$ with
$0\le\widetilde\chi\le1$ a.e.\ on $I{\times}\varOmega$,
and if \eq{def-thermo-Ch4-chi-rule} holds for any
$\wt\chi\in W^{1,\infty}(I{\times}\varOmega)$ with $0\le\wt\chi\le1$ a.e., and if 
\eq{EUL-L-Jeff-St0-thermo-grad} and \eq{EUL-L-Jeff-St4-thermo-grad}
hold a.e.\ on $I{\times}\varOmega$ together with the respective initial conditions
for $\varrho$ and $\Fedev$ in \eq{EUL-L-PT-IC}.
\end{definition}

We devise the partially decoupled truncated time discretization of the system
\eq{EUL-L-Jeff-St6-thermo-hyper}--\eq{EUL-L-Jeff-St-thermo-grad}.
We use the equidistant partition of the time interval $I=[0,T]$ with the
time step $\tau>0$, assuming $T/\tau$ integer. We denote by $\varrho_\TAU^k$,
$\vv_\TAU^k$, $\FedevTAU^{\!\!k}$, $\theta_\TAU^k$, ...
the approximate values of $\varrho$, $\vv$, $\Fedev$, $\theta$, ...
at time instants $t=k\tau$ with $k=1,2,...,T/\tau$. 
We do not make a full time discretization but use the mapping
$\boldG:I{\times}\R^3{\times}\Rtr{\times}\R^{3\times3}\to\R^{3\times3}$
defined by $\GG(t)=\boldG(t,\vv,\LL,\GG_0)$
with $\GG=\GG(t)$ being the unique solution to the initial-value problem
\begin{align}
\pdt{\GG}=\LL\GG(t)-(\vv\Cdot\nabla)\GG(t)\
\ \ \text{ with }\ \ \GG(0)=\GG_0\,.
\end{align}
It is important that, since ${\rm tr}\LL=0$,
from $\det\GG_0=1$ we obtain $\det\GG(t)=1$ for all $t>0$.
In detail, using the Jacobi formula and the Cramer rule, we indeed have
\begin{align*}
  \pdt{}J+\vv\Cdot\nabla J&
  =({\rm Cof}\GG)\Colon\Big(\pdt{}\GG+(\vv\Cdot\nabla)\GG\Big)
=({\rm Cof}\GG)\Colon\big({\rm dev}(\nabla\vv)\GG-\GG\LL\big)
\\&=(({\rm Cof}\GG)\GG^\top)\Colon({\rm dev}(\nabla\vv)
-(\GG^\top{\rm Cof}\GG)\Colon\LL
=J\bbI\Colon({\rm dev}(\nabla\vv)-\LL)
=0
\end{align*}
with $J=\det\GG$, so that the
constant initial condition $\det\GG_0=1$ is transported as the constant = 1.

We use it for keeping the ``semi-discretization'' of the flow-rule for
\eq{EUL-L-Jeff-St4-thermo-grad} because, full time discretization
$\frac{\FedevTAU^{\!\!k}{-}\FedevTAU^{\!\!k-1}\!\!}\tau\,=\LdTAUk\FedevTAU^{\!\!k}
-(\vv_\TAU^k\Cdot\nabla)\FedevTAU^{\!\!k}$ could not keep
$\det\FedevTAU^{\!\!k}$ equal 1 (and even positive), so that 
$\FedevTAU^{\!\!k}$ could even not to be invertible while
replacing $(\FedevTAU^{\!\!k})^{-1}$ with $({\rm Cof}\FedevTAU^{\!\!k})^{\top}$
would bring some difficulties either, cf.\ also Remark~\ref{rem-full-discret} below
for further discussion.

Altogether, considering a sufficiently large threshold $\THRESHOLD$, we devise
the time semi-discretization of 
\eq{EUL-L-Jeff-St-thermo-grad} with (\ref{EUL-L-Jeff-St6-thermo-hyper}a,b) after
the truncation for $(\varrho_\TAU^k,\pp_\TAU^k,\LdTAUk,\LpTAU^k,\FedevTAU^{\!\!k},\theta_\TAU^k)$,
and thus also $\vv_\TAU^k$ and $\Ent_\TAU^k$, as 
\begin{subequations}\label{EUL-L-visco-thermo-PT+disc}
\begin{align}\label{EUL-L-visco-thermo-PT+0disc}
&\frac{\varrho_\TAU^k{-}\varrho_\TAU^{k-1}\!\!}\tau\,=
-\,{\rm div}\,\pp_\TAU^k
\ \ \ \text{ with }\ \,\pp_\TAU^k=\varrho_\TAU^k\vv_\TAU^k\,,\!
\\[-.2em]&\nonumber
\frac{\pp_\TAU^k{-}\pp_\TAU^{k-1}\!\!}\tau\,=\frac{
{\rm div}\boldT(J_\TAU^{k-1},\FedevTAU^{\!\!k-1},0)+
\nabla\ADI(
J_\TAU^{k-1},\theta_\TAU^{k-1})}{\!1+\big(\|\wt\Eng_\TAU^{k-1}\|_{L^1(\varOmega)}{-}\THRESHOLD\big)^+\!}
+{\rm div}\Big(\DD_{1,\TAU}^k+\DD_{0,\TAU}^k
-\pp_\TAU^k{\otimes}\vv_\TAU^k\Big)+\varrho_\TAU^k\GRAVITY_{\tau}^k
\\[-.0em]&\nonumber
\hspace{4.3em}\text{with }\ \DD_{1,\TAU}^k=
\bbD_1\LdTAUk-{\rm div}\big(\HYPER_1|\nabla\LdTAUk|^{q-2}\nabla\LdTAUk\big)
\,,\ \ J_\TAU^{k-1}=\rhoR/\varrho_\TAU^{k-1}\,,
\!\!\!\\[-.2em]&
\hspace{4.3em}\text{and }\ \ 
\DD_{0,\TAU}^k=\bbD_0\strain(\vv_\TAU^k)-{\rm div}\,\mathfrak{H}_\TAU^k
\ \ \text{ with }\ \ 
\mathfrak{H}_\TAU^k=
\HYPER_0\big|\nabla\strain(\vv_\TAU^k)\big|^{p-2}\nabla\strain(\vv_\TAU^k)\,,
\label{EUL-L-visco-thermo-PT+1disc}
\\[-.1em]&
\big[\zetap(\chi_\TAU^{k-1},\cdot)\big]'(\LpTAU^k)
=\frac{\DEV\boldM(\FedevTAU^{\!\!k-1})}{\!
  1+\big(\|\wt\Eng_\TAU^{k-1}
  \|_{L^1(\varOmega)}{-}\THRESHOLD\big)^+\!}
+(\FedevTAU^{\!\!k-1})^\top\DD_{1,\TAU}^k(\FedevTAU^{\!\!k-1})^{-\top},
\nonumber\\[-1.7em]\label{EUL-L-visco-thermo-PT+.disc}
\\[.1em]
&\FedevTAU^{\!\!k}=\boldG\big(\tau,\vv_\TAU^k,\LdTAUk,\FedevTAU^{\!\!k-1}\big)
\ \ \text{ with }\
\LdTAUk=\DEV(\nabla\vv_\TAU^k)-\FedevTAU^{\!\!k-1}\LpTAU^k(\FedevTAU^{\!\!k-1})^{-1}\,,
\label{EUL-L-visco-thermo-PT+2disc}
\\[.1em]&\nonumber
\frac{\Ent_\TAU^k{-}\Ent_\TAU^{k-1}\!\!}\tau\,
=\frac{\ADI(J_\TAU^k,\theta_\TAU^k)\,{\rm div}\,\vv_\TAU^k}{\!
  1+\big(\|\wt\Eng_\TAU^k\|_{L^1(\varOmega)}{-}\THRESHOLD\big)^+\!}
+{\rm div}\Big(\kappa(
\chi_\TAU^k)\nabla\theta_\TAU^k-\Ent_\TAU^k\vv_\TAU^k\Big)
+\xi(\chi_\TAU^k;\vv_\TAU^k,\LdTAUk,\LpTAU^k)
\\[-.7em]&\hspace{13.5em}
\text{with }\ \ \Ent_\TAU^k=
\wt\ENT(J_\TAU^k,\theta_\TAU^k)+\varrho_\TAU^k\ell\chi_\TAU^k
\,\ \ \text{ and}\ \ J_\TAU^k=\rhoR/\varrho_\TAU^k\,,
\label{EUL-L-visco-thermo-PT+3disc}
\\[-.5em]&
\frac{\chi_\TAU^k{-}\chi_\TAU^{k-1}\!\!}\tau\,+H^{-1}(\chi_\TAU^k)
\ni\NU^{-1}\Upsilon(\theta_\TAU^k{-}\theta_\text{\sc pt})-\vv_\TAU^k\Cdot\nabla\chi_\TAU^k
\label{EUL-L-visco-thermo-PT+4disc}
\end{align}\end{subequations}
with $\GRAVITY_{\tau}^k:=\int_{\tau(k-1)}^{\tau k}\GRAVITY(t)\,\d t$ in
\eq{EUL-L-visco-thermo-PT+1disc} and with $\wt\ENT$ from
\eq{EUL-L-Jeff-St5-thermo-grad}. Here we used the abbreviation 
$\wt\Eng_\TAU^{k-1}=\wt\ENG(
J_\TAU^{k-1},\FedevTAU^{\!\!k-1},\theta_\TAU^{k-1})$ and
$\wt\Eng_\TAU^k=\wt\ENG(J_\TAU^k,\FedevTAU^{\!\!k},\theta_\TAU^k)$.

This scheme is partly decoupled due to the terms with
$\theta_\TAU^{k-1}$ and $\chi_\TAU^{k-1}$ in \eq{EUL-L-visco-thermo-PT+1disc}
and \eq{EUL-L-visco-thermo-PT+.disc}. Specifically, the system
(\ref{EUL-L-visco-thermo-PT+disc}a--d) is to be solved for
$(\varrho_\TAU^k,\pp_\TAU^k,\LdTAUk,\LpTAU^k,\FedevTAU^{\!\!k})$
and thus also for $\vv_\TAU^k$ and, then,
(\ref{EUL-L-visco-thermo-PT+disc}e,f) is to be solved for
$(\theta_\TAU^k,\chi_\TAU^k)$.

We will use the notation that exploits the interpolants
defined as follows: using the values $(\vv_\tau^k)_{k=0}^{T/\tau}$, we define the
piecewise affine and piecewise constant (forward or backward) interpolants
respectively as
\begin{align}\nonumber\\[-2.2em]\nonumber
&\vv_\tau(t)\!:=\Big(\frac t\tau\,{-}\,k{+}1\Big)\,\vv_\tau^k
\!+\Big(k\,{-}\,\frac t\tau\Big)\,\vv_\tau^{k-1}\ \ 
\text{ and }\ \ 
\\[-.0em]&
\overline\vv_\tau(t)\!:=\vv_\tau^k
\,,\ \ \text{ and }\ \ \underline\vv_\tau(t)\!:=\vv_\tau^{k-1}
\ \ \text{ for }\ \ (k{-}1)\tau<t\le k\tau\ 
\label{def-of-interpolants-Ch3}\end{align}
with $k=0,1,...,T/\tau$. Analogously, we define also $\pp_\tau$,
$\overline\pp_\tau$, $\oFedevT$,  and $\uFedevT$, etc.
Written compactly in terms of these interpolants now also with
defining
$$
\tildeFedevTAU(t)
=\boldG\big(t,\vv_\TAU^k,\LpTAU^k,\FedevTAU^{\!\!k-1}\big)
\ \ \text{ on }\ t\in[(k{-}1\tau,k\tau]\ \ \text{ for }\ \
k=1,...,T/\tau\,,
$$
the system \eq{EUL-L-visco-thermo-PT+disc} looks as
\begin{subequations}\label{EUL-L-visco-thermo-PT+disc+}
\begin{align}\label{EUL-L-visco-thermo-PT+0disc+}
&\!\!\pdt{\varrho_\TAU}=-\,{\rm div}\,\opT
\ \ \ \text{ with }\ \,\opT=\orT\ovT\,,\!
\\[-.2em]&\nonumber
\!\!\pdt{\pp_\TAU}=
\frac{{\rm div}\,
\boldT(\uJT,\uFedevT,0)+
\nabla\ADI(
\uJT,\utT)}{\!  1{+}\big(\|\ueT\|_{L^1(\varOmega)}{-}\THRESHOLD\big)^+\!}
+{\rm div}\big(\oDTone
+\oDTzero
-\opT{\otimes}\ovT\big)+\orT\overline{\GRAVITY}_{\tau}
\\[-.1em]&\nonumber
\hspace{4em}\text{with }\ \oDTone=
\bbD_1\oLdT-{\rm div}\big(\HYPER_1|\nabla\oLdT|^{q-2}\nabla\oLdT\big)\,,
\ \ \ \uJT=\rhoR/\urT\,,
\\[-.2em]&
\hspace{4em}\text{and }\ \ \oDTzero=\bbD_0\strain(\ovT)
-{\rm div}\bar{\mathfrak{H}}_\TAU
\ \ \text{ with }\ \ \bar{\mathfrak{H}}_\TAU=
\HYPER_0\big|\nabla\strain(\ovT)\big|^{p-2}\nabla\strain(\ovT)
\,,
\label{EUL-L-visco-thermo-PT+1disc+}
\\[-.1em]&
\!\big[\zetap(\uxT,\cdot)\big]'(\oLpT)
=\frac{\DEV\boldM(\uFedevT)}{\!
  1{+}\big(\|\ueT
  \|_{L^1(\varOmega)}{-}\THRESHOLD\big)^+\!}
+\uFedevT^{\!\!\!\top}\oDTone\uFedevT^{\!\!\!-\top}
\,,\label{EUL-L-visco-thermo-PT+4disc+}
\\[-.4em]
&\!\!
\pdt{\tildeFedevTAU}
=\oLdT\tildeFedevTAU-(\ovT\Cdot\nabla)\tildeFedevTAU
\ \ \text{ with }\
\oLdT=\DEV(\nabla\ovT)-\uFedevT\oLpT\uFedevT^{\!\!\!-1}\,,
\!\!\!
\label{EUL-L-visco-thermo-PT+2disc+}
\\[-.0em]&\nonumber
\!\!\pdt{\Ent_\TAU}=\frac{\ADI(\oJT,\otT)\,{\rm div}\,\ovT}{\!
  1+\big(\|
  \oeT\|_{L^1(\varOmega)}{-}\THRESHOLD\big)^+\!}
+{\rm div}\big(\kappa(\oxT)\nabla\otT-\ouT\ovT\big)
+\xi(\oxT;\ovT,\oLdT,\oLpT)\ \ \ 
\\[-.7em]&
\hspace{11em}\text{with }\ \ \ouT
=\wt\ENT(\oJT,\otT)+\orT\ell\oxT
\ \ \text{ and}\ \oJT=\rhoR/\orT\,,
\label{EUL-L-visco-thermo-PT+3disc+}
\\[-.3em]&\!\!
\pdt{\chi_\TAU}+H^{-1}(\oxT)\ni
\NU^{-1}\Upsilon(\otT{-}\theta_\text{\sc pt})-\ovT\Cdot\nabla\oxT\,.
\label{EUL-L-visco-thermo-PT+5disc+}
\end{align}\end{subequations}
where $\ueT=\wt\ENG(\uJT,\uFedevT,\utT)$ and
$\oeT=\wt\ENG(\oJT,\oFedevT,\otT)$.
Of course, the boundary conditions \eq{enhanced-BC} are to be
inherited appropriately, i.e.
\begin{subequations}\label{enhanced-BC-disc}\begin{align}
&\nn\Cdot\ovT=0\,,\ \ \ \ \ 
\big[\boldT(\uJT,\uFedevT,\utT){+}\oDTone{+}\oDTzero)\nn
+\divS(\bar{\mathfrak{H}}_\TAU\nn)\big]_\text{\sc t}^{}=\bm 0\,,\ \ \
\\&\bar{\mathfrak{H}}_\TAU\Colon(\nn{\otimes}\nn)=\bm0\,,\ \ \ \ 
(\nn\Cdot\nabla)\oLdT=\bm0\,,\ \ 
\ \text{ and }\ \ \ \nn\Cdot\nabla\otT=0\,.
\end{align}\end{subequations}

Besides the ansatz \eq{special-thermo-case+}, we accept the 
assumptions, for some $p,q>3$,
\begin{subequations}\label{EUL-L-Jeff-PT-ass}
\begin{align}\nonumber
&
\psi(J,F^*,\theta)=\varphi(F^*)+\wt\COUPLING(J,\theta)
\ \ \text{ with }\ 
\varphi\in C^1(
{\rm GL}_3^+),
\ \ \ \ \ \inf_{F^*\in{\rm SL}_3}\varphi(F^*)>-\infty\,,\ \text{ and} 
\\[-.3em]&\label{Euler-ass-therm-psi}
\hspace{6em}\text{with }\ \wt\COUPLING\in C^1(\R^+{\times}\R^+)
\,,\ \ \wt\COUPLING_\theta'(\cdot,0)=0\,,
\ \ \inf_{J\ge0,\ \theta\ge0}\wt\COUPLING(J,\theta)>-\infty\,,
\\[-.5em]&\nonumber
\forall\delta>0\ \
\exists\, 0<c_\delta^{}\le C_\delta^{}<+\infty
\ \ \forall(J,\theta)\in[\delta,1/\delta]{\times}\R^+:
\\[-.1em]&
\hspace{2em}\wt\ENT(J,\theta)\ge c_\delta^{}\theta\ \ \text{ and }\ \
\wt\ENT_\theta'(J,\cdot)>0\ \text{ on $(0,+\infty)$\,,}
\label{Euler-ass-therm-psi-1}
\\[-.1em]&
\hspace{2em}|\wt\ENT_J'(J,\theta)|+\theta\wt\ENT_\theta'(J,\theta)
\le C_\delta^{}\big(1\,{+}\,\theta\big)
\label{Euler-ass-therm-psi-1+++}
\ \text{ and }\ |\wt\ENT_{J\theta}''(J,\theta)|\le C_\delta^{}\big(1\,{+}\,\theta\big)
,
\\&\hspace{0em}\exists C<\infty\  \forall(J,F^*,\theta)\in\R^+{\times}
       {\rm SL}_3{\times}\R^+{:}
    \ |\boldT(J,F^*\!,0)|+|\ADI(J,\theta)|\le C\big(1{+}\wt\ENG(J,F^*,\theta)\big),
\label{ass-stress-control+}
\\
&\bbD_0\in{\rm Lin}(\Rsym,\Rsym)\,,
\ \
\bbD_1\in{\rm Lin}(\Rtr,\Rtr)\
\
\text{ positive definite}\,,
\ \
\HYPER_0,\HYPER_1>0
\,,\label{EUL-L-Jeff-St-ass-D-D}
\\&\nonumber\zetap\in C([0,1]{\times}\Rtr),\ \ \ \ 
\forall\chi\in[0,1]:\ \ \ \zetap(\chi,\cdot)\
\text{is convex smooth, and}\
\\[-.1em]&\hspace{5em}
(\chi,L)\mapsto[\zetap(\chi,\cdot)]'(L)
\ \text{ is continuous}\,,\ \ \ \sup\frac{|[\zetap(\chi,\cdot)]'(L)|}{1+|L|}<\infty\,,
\label{EUL-L-Jeff-St-thermo-ass-zetap}
\\[-.3em]&
\kappa\in C([0,1])\ \ \text{ positive}\,,\label{Euler-ass-therm-kappa+}
\\&\Upsilon\in C(\R)\ \text{ increasing},\ \Upsilon(0)=0,\ \
\sup|\Upsilon(\cdot)|/(1{+}|\cdot|^{1/2})<+\infty\,,\ \
\sup\Upsilon'(\cdot)<+\infty\,,
\label{ass-Upsilon}\\&\nonumber
\rhoR,\varrho_0\in W^{1,\infty}(\varOmega)\ \text{ positive}\,,\ \ \
\vv_0\in L^2(\varOmega;\R^3)\,,\ \ \
\Fezero\in W^{1,\infty}(\varOmega;\R^{3\times3}),\ \ \det\Fezero>0,
\ \
\\&
\qquad\theta_0\in L^1(\varOmega)\,,\ \theta_0\ge0\,,\ \ \ 
\chi_0\in W^{1,s}(\varOmega),\ 1\le s<5/4,\ \ 0\le\chi_0\le1\,,
\label{EUL-L-Jeff-St-thermo-ass-IC}\\&
\wt\ENG(\det\Fezero,\Fedevzero,\theta_0^{})\in L^1(\varOmega)\,,\ \ \ 
\GRAVITY\in L^1(I;L^\infty(\varOmega;\R^3))\,.
\label{EUL-L-Jeff-St-thermo-ass-load}\end{align}\end{subequations}
In (\ref{EUL-L-Jeff-PT-ass}a,b,f), ${\rm SL}_3=\{F\in\R^{3\times3};\ \det F=1\}$
denotes the special linear group while in \eq{Euler-ass-therm-psi}
${\rm GL}_3^+=\{F\in\R^{3\times3};\ \det F>0\}$
so that the derivative $\varphi'$ can be well defined even if restricted
on the nonlinear manifold ${\rm SL}_3$.
The condition $\COUPLING_\theta'(\cdot,0)=0$ in \eq{Euler-ass-therm-psi}
reflects the 3rd law of thermodynamics: the entropy at temperature $\theta=0$ 
is finite and independent of the mechanical state, standardly calibrated as 0.
Note that \eq{EUL-L-Jeff-St-ass-D-D}
ensures that $\DD_1$ from \eq{EUL-L-Jeff-St6-thermo-hyper-D1} is
$\Rdev$-valued and the contribution $\Feedev^\top\DD_1\Feedev^{-\top}$ to the
Mandel stress in \eq{EUL-L-Jeff-St2-thermo-grad} is traceless, albeit not
symmetric in general. The condition \eq{ass-stress-control+} is the
energy-controlled stress, here with the energy
$\wt\ENG(J,F^*,\theta)=\varphi(F^*)+\wt\COUPLING(J,\theta)
-\theta\wt\COUPLING_\theta'(J,\theta)$, cf.\
\eq{EUL-L-energy-Jeff-St-thermo}. This condition is well
complying with physically relevant situations, e.g.\ $\varphi$
with a polynomial growth or $\COUPLING(\cdot,\theta)$ with
a polynomial or logarithmic singularity at $J=0$, as 
it was articulated by J.M.\,Ball \cite{Ball84MELE,Ball02SOPE}
for the Kirchhoff rather than the Cauchy stress or the thermal pressure
as used here, cf.\ \cite[Remark~4.2]{Roub25TEFS}.
From \eq{ass-stress-control+}, we also obtain the control on
$\DEV\boldT(J,F^*,0)=\DEV(\varphi'(F^*)F^{*\top})$, which will be
used in \eq{enrg-est-of-stress3} below.

\begin{proposition}[Existence and regularity of weak solutions]\label{prop-Euler-PT}
Let $p>3$ and $q>3$ and the assumptions 
\eq{EUL-L-Jeff-PT-ass} hold. Then:\\
\Item{(i)}{For any $\THRESHOLD$ and for all sufficiently small time steps
$\tau>0$, the staggered scheme \eq{EUL-L-visco-thermo-PT+disc} has a solution
$(\varrho_\TAU^k,\vv_\TAU^k,\LpTAU^k,\FedevTAU^{\!\!k},\theta_\TAU^k,\chi_\TAU^k)\in
W^{1,r}(\varOmega)\times W^{2,p}(\varOmega;\R^3)\times W^{1,r}(\varOmega;\Rtr)\times
W^{1,q}(\varOmega;\R^{3\times3})\times W^{1,1}(\varOmega)\times W^{1,1}(\varOmega)$ and
$\varrho_\TAU^k>0$, $\det\FedevTAU^{\!\!k}=1$, and $\theta_\TAU^k>0$ a.e.\ on $\varOmega$ with
$1/\varrho_\TAU^k\in W^{1,r}(\varOmega)$.}
\ITEM{(ii)}
{For any $\THRESHOLD$ fixed and for $\tau\to0$, there is a
subsequence of 
$(\orT,\ovT,\oFedevT,\oLpT,\otT,\oxT)$ converging in the topologies specified in
\eq{EUL-L-converge} below to some limit $(\varrho,\vv,\Fedev,\Lp,\theta,\chi)$.
Moreover, there is a sufficiently large $\THRESHOLD$ for which any such five-tuple
is a weak solution in the sense of Definition~\ref{def-thermo-Ch5-PT}.}
\Item{(iii)}
  {the initial-boundary-value problem for the system
    \eq{EUL-L-Jeff-St6-thermo-hyper}--\eq{EUL-L-Jeff-St-thermo-grad} has
at least one weak solution $(\varrho,\vv,\Fedev,\Lp,\theta,\chi)$ in the sense
of Definition~\ref{def-thermo-Ch5-PT} such that also
$\varrho\in L^\infty(I;W^{1,r}(\varOmega))\,\cap\,C(I{\times}\barOmega)$ with
$\min_{I{\times}\barOmega}\varrho>0$, $\vv\in L^\infty(I;L^2(\varOmega;\R^3))$,
$\Fedev\in L^\infty(I;W^{1,r}(\varOmega;\R^{3\times3}))
\,\cap\,C(I{\times}\barOmega;\R^{3\times3})$ with
$\det\Fedev=1$, and further $\theta\in C_{\rm w}(I;L^{1+\ALPHA}(\varOmega))
\,\cap\,L^\EXP(I;W^{1,\EXP}(\varOmega))$ with 
$1\le\EXP<5/4$, and $\chi\in L^\infty(I;W^{1,1}(\varOmega)$.}
\end{proposition}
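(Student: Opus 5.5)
The proof follows the three items of the statement: solvability of the staggered scheme, a priori estimates uniform in $\tau$ with a limit passage for fixed $\THRESHOLD$, and removal of the truncation by an energy argument for large $\THRESHOLD$.

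\emph{Step (i): solvability of the scheme.} At each step $k$ the mechanical block (\ref{EUL-L-visco-thermo-PT+disc}a--d) is solved first, with $\theta_\TAU^{k-1}$ and $\chi_\TAU^{k-1}$ frozen.

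Given $\vv\in W^{2,p}(\varOmega;\R^3)\subset W^{1,\infty}$, the discrete continuity equation \eq{EUL-L-visco-thermo-PT+0disc} is a linear transport-reaction equation. It is uniquely solvable in $W^{1,r}$ with $\varrho_\TAU^k>0$ and $1/\varrho_\TAU^k\in W^{1,r}$, via the usual regularized-transport argument, provided $\tau\|{\rm div}\,\vv\|_{L^\infty}$ is small; this is the source of ``$\tau$ sufficiently small''.

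The pair $(\LdTAUk,\LpTAU^k)$ is determined by the monotone elliptic problem \eq{EUL-L-PT-Le-equation} with the frozen $\FedevTAU^{\!\!k-1}$. The $q$-Laplacian plus $\bbD_1$ plus the monotone $[\zetap]'$ gives a unique $\LdTAUk\in W^{1,q}\subset L^\infty$ continuously depending on $\vv$. Then $\FedevTAU^{\!\!k}=\boldG(\tau,\cdot)$ is obtained by the characteristics and keeps $\det=1$, as computed before the scheme.

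The momentum equation is then a coercive quasilinear problem in $\vv$, driven by the $p$-Laplacian of $\strain(\vv)$. I would solve it by Schauder/Leray--Schauder (or Galerkin with Brouwer) on the map $\vv\mapsto$ solution, using the energy test below for the a priori bound. Afterwards (\ref{EUL-L-visco-thermo-PT+disc}e,f) is solved for $(\theta,\chi)$. The heat equation has a right-hand side in $L^1$ ($\xi\ge0$), so I would truncate $\xi$, use pseudomonotonicity together with the monotone inclusion for $\chi$, and pass to the limit by $L^1$ theory. Positivity of $\theta$ follows from testing by $\theta^-$.

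\emph{Step (ii): estimates uniform in $\tau$.} Test the momentum equation by $\vv_\TAU^k$ and the flow rule by $\LpTAU^k$, and use the discrete analogue of \eq{EUL-L-Jeff-St-calcul+}. Convexity of $\varphi$ is not available, but the exact flow $\boldG$ lets me write $\varphi(\FedevTAU^{\!\!k})-\varphi(\FedevTAU^{\!\!k-1})$ as a time integral, and the mismatch from the lagged $\FedevTAU^{\!\!k-1}$ is of order $\tau$. The kinetic term is handled by the standard $\varrho|\vv|^2$ convexity trick. Adding the heat equation tested by 1 cancels $\xi$ and the $\ADI$-terms; this is the purpose of the identical truncation factors. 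The result is a discrete total-energy bound on $\|\wt\Eng_\TAU^k\|_{L^1}$ and a dissipation bound on $\vv\in L^p(W^{2,p})$ and $\Ld\in L^q(W^{1,q})$. Here the truncation $1+(\|\wt\Eng\|_{L^1}-\THRESHOLD)^+$ together with \eq{ass-stress-control+} keeps the explicit stress terms bounded independently of the (still unknown) energy.

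From $\nabla\vv\in L^1(L^\infty)$, transport estimates give $\varrho,1/\varrho,\Fedev\in L^\infty(W^{1,r})$, hence positive bounds on $\varrho$ and bounds on $\Fedev^{-1}$. Consequently $\Lp\in L^2$ via \eq{EUL-L-Jeff-St-thermo-ass-zetap}.

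For the temperature, the Boccardo--Gallou\"et technique (testing by $1-(1+\theta)^{-\ALPHA}$) gives $\nabla\theta\in L^\EXP$ with $\EXP<5/4$. Testing the $\chi$-inclusion by $\Delta\chi$-type increments, with the sublinear $\Upsilon$ of \eq{ass-Upsilon}, gives $\chi\in L^\infty(W^{1,1})$ and a bound on $\DT\chi$; this yields the strong convergence of $\chi$ (Aubin--Lions). Strong convergence of $\ovT$ in $L^p(W^{1,p})$ follows from the $p$-Laplacian structure by the usual limsup argument, and similarly for $\oLdT$; strong convergence of $\Fedev$ and $\varrho$ follows by Arzel\`a--Ascoli. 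I expect the main obstacle here: passing to the limit in the quadratic dissipation $\xi$ in the heat equation requires strong convergence of $\Ld$ and $\Lp$. For $\Lp$ this must come from strict monotonicity of $\bbD_1$ combined with the energy-equality limsup argument, since $\zetap$ may degenerate completely when $\chi=1$. I would first try to obtain the $\limsup$ of the dissipation from the discrete mechanical energy inequality compared with the limit energy equality, the latter justified by the regularity $\vv\in L^p(W^{2,p})$.

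\emph{Step (iii): removing the truncation.} In the limit the total energy balance \eq{EUL-L-energy-Jeff-St-thermo} holds and bounds $\|\wt\ENG\|_{L^1}$ by a constant depending only on the data, through Gronwall on the gravity term. Choosing $\THRESHOLD$ above this constant makes the truncation factor equal to 1, so the limit solves the original system. The regularity listed in (iii) is inherited from the uniform estimates, and the weak continuity of $\theta$ from the $L^{1+\ALPHA}$ bound together with the equation.
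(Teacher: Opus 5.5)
Your overall architecture (staggered scheme, $\TAU$-uniform estimates, limit for fixed $\THRESHOLD$, removal of the truncation by the energy balance) matches the paper's. However, the a~priori estimate at the heart of your Step~(ii) does not work as described, for three reasons.

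First, the truncation factors are \emph{not} identical. In \eq{EUL-L-visco-thermo-PT+1disc} and \eq{EUL-L-visco-thermo-PT+.disc} the factor is $1+(\|\wt\Eng_\TAU^{k-1}\|_{L^1(\varOmega)}-\THRESHOLD)^+$ and acts on $\ADI(J_\TAU^{k-1},\theta_\TAU^{k-1})$. By contrast, \eq{EUL-L-visco-thermo-PT+3disc} carries $\wt\Eng_\TAU^{k}$ and $\ADI(J_\TAU^{k},\theta_\TAU^{k})$. Likewise, the flow rule uses $\zetap(\chi_\TAU^{k-1},\cdot)$ while $\xi$ in the heat equation uses $\chi_\TAU^k$. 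So adding the heat equation tested by $1$ cancels neither the $\ADI$-terms nor the $\zetap$-part of $\xi$.

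Second, even with identical factors, the increments of the stored energy $\varphi(\FedevTAU^{\!\!k})+\wt\COUPLING(J_\TAU^k,0)$ would enter multiplied by a $k$-dependent factor in $(0,1]$. They would then not telescope: summation by parts leaves the uncontrolled variation of that factor.

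Third, an $O(\tau)$ error from the lagged $\varphi'(\FedevTAU^{\!\!k-1})$ is not available for $\varphi\in C^1$ before any $L^\infty$-bound on $\Fedev$. Hence the discrete bound on $\|\wt\Eng_\TAU^k\|_{L^1(\varOmega)}$, from which you read off $\theta$, is not obtained.

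The remedy, which is exactly the paper's mechanism, is that no stored energy is needed at the discrete level. Your closing remark about the truncation is the whole argument.

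1. By \eq{ass-stress-control+}, each truncated conservative term, with its factor taken at its own time level, is bounded in $L^1(\varOmega)$ by a constant depending only on $\THRESHOLD$. Pair $\boldT$ with $\strain(\vv_\TAU^m)$, $\ADI$ with ${\rm div}\,\vv_\TAU^m$, and $\boldM\Colon\LpTAU^m$ (rewritten as $\DEV\boldT\Colon(\LdTAUm-\nabla\vv_\TAU^m)$) in $L^1\times L^\infty$. Absorbing these through $W^{1,p},W^{1,q}\subset L^\infty$ ($p,q>3$) into the hyperviscous dissipation turns \eq{EUL-L-basic-engr-disc+}, which has only kinetic energy and dissipation on the left, into \eq{est-of-basic}.
2. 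Transport then bounds $\varrho$, $1/\varrho$ and $\Fedev$. Kinematics, not $\zetap$ (which may degenerate), bounds $\Lp=\Fedev^{-1}\DEV(\nabla\vv-\Ld)\Fedev$, so $\xi\in L^1$.
3. The heat equation tested by $1$ gives $\theta\in L^\infty(I;L^1(\varOmega))$, with every right-hand-side term bounded separately.
4. The order is then forced. The $L^2$-bound on $\partial_t\chi+\vv\cdot\nabla\chi$ (sublinear $\Upsilon$) must precede the Boccardo--Gallou\"et estimate, since $\varrho\ell\chi$ has to be moved to the right-hand side of the heat equation. The bound on $\nabla\chi$ comes only after $\nabla\theta$, via $\sup\Upsilon'<\infty$.

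In Step~(iii), the energy balance closes only while the truncation is inactive, so you need a continuation argument from $t=0$.

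Finally, separate limsup arguments for $\ovT$ and $\oLdT$ are circular. The momentum one needs $\wb\DD_{1,\tau}$ to converge strongly, and the $\Ld$ one needs $\nabla\ovT$ to converge strongly inside $[\zetap]'$. The paper breaks this circle by inertia:
1. $\ovT=\opT/\orT$ converges strongly by Aubin--Lions.
2. Interpolation with the $W^{2,p}$-bound gives strong convergence of $\nabla\ovT$.
3. Then $\oLdT$ converges strongly by monotonicity and $\oLpT$ by kinematics.
4. Only then does $\nabla^2\ovT$ converge, via the limsup.

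Your joint dissipation-limsup idea could replace this, but only if the conservative stresses enter as strongly converging data rather than through $\varphi$.
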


\noindent{\it Proof.}
The main conceptual aspect is that, from the energetics, we can read the a-priori
estimates for $\vv\in L^p(I;W^{2,p}(\varOmega;\R^{3\times3}))$ and
$\Ld\in L^2(I;W^{1,q}(\varOmega;\Rtr))$. Then, from 
$\pdt{}\Fedev=\Ld\Fedev-(\vv\Cdot\nabla)\Fedev$ with the initial
condition $\Fedev|_{t=0}^{}\in W^{1,q}(\varOmega;\R^{3\times3})$
with $\det\Fedev|_{t=0}^{}=1$, we can read also
$\Fedev\in L^\infty(I;W^{1,q}(\varOmega;\R^{3\times3}))$. We will split the proof
to four steps. Some points are rather sketched while referring to
the  analysis of the time discretization of a non-degenerate
variant of the the Jeffreys rheology in \cite{Roub26TDCL,Roub25STDF}.

\medskip{\it Step 1: The choice of  $\THRESHOLD$}. 
We analyze
the (now formal) energetics of the model, departing from the total
energy balance \eq{EUL-L-energy-Jeff-St-thermo}. From this, also by
the Gronwall-inequality strategy 
exploiting also the total-mass conservation due to the equation
\eq{EUL-L-Jeff-St0-thermo}, we can read the bound for the energy
$\int_\varOmega\wt\ENG(J,\Fedev,\theta)\,\d\xx\le\THRESHOLD-1$ uniformly in time $t\in I$ 
with $\wt\ENG(J,\Fedev,\theta)=\varphi(\Fedev)+\wt\ENT(J,\theta)
$. This gives the suggestion for the choice of $\THRESHOLD$.

\medskip{\it Step 2: Basic stability of the scheme
\eq{EUL-L-visco-thermo-PT+disc} and first a-priori estimates.}
Exploiting the convexity of $(\varrho,\pp)\mapsto|\pp|^2/\varrho$ 
and using the discrete variant of the calculus \eq{EUL-L-Jeff-St-calcul+}, 
we obtain
\begin{align}\nonumber
  \!\!\!\!\!&\!\!\int_\varOmega\!\frac{|\pp_\TAU^k|^2\!\!\!}{2\varrho_\TAU^k\!\!\!}\,\d\xx
  +\tau\!\sum_{m=1}^k\int_\varOmega\!\bigg(
\bbD_0\strain(\vv_\TAU^m)\Colon\strain(\vv_\TAU^m)+\bbD_1\LdTAUm\Colon\LdTAUm
+\HYPER_0|\nabla\strain(\vv_\TAU^m)|^p
\\[-1.2em]&\nonumber\hspace{12em}
+\HYPER_1
|\nabla\LdTAUm|^q
+[\zetap(
\chi_\TAU^{m-1};\cdot)]'(\LpTAU^m)\Colon\LpTAU^m\bigg)\,\d\xx
\\[-.8em]&\nonumber\hspace{.2em}
\ \le\!\int_\varOmega\!\frac{|\pp_0|^2\!\!}{2\varrho_0\!\!}
\,\d\xx
+\tau\!\sum_{m=1}^k\int_\varOmega\!\bigg(\varrho_\TAU^m\GRAVITY_{\tau}^m\Cdot\vv_\TAU^m
\\[-.3em]&\hspace{.8em}
-\frac{\boldT(J_\TAU^{m-1}\!,\FedevTAU^{\!\!m-1})\Colon\nabla\vv_\TAU^m
+\ADI(J_\TAU^{m-1}\!,\theta_\TAU^{m-1})\,{\rm div}\,\vv_\TAU^m
+\boldM(\FedevTAU^{\!\!m-1})\Colon\LpTAU^m}{\!\!
1+\big(\|\wt\ENG(J_\TAU^{m-1}\!,\FedevTAU^{\!\!m-1}\!,\theta_\TAU^{m-1})\|_{L^1(\varOmega)}{-}
\THRESHOLD\big)^+}
\bigg)\,\d\xx\,.\!\!\!
\label{EUL-L-basic-engr-disc+}\end{align}

The last term in \eq{EUL-L-basic-engr-disc+}
can be estimated by the H\"older inequality,
we have
\begin{subequations}\begin{align}\nonumber
\!\!\int_\varOmega&
\frac{-\,
\boldT(J_\TAU^{m-1},\FedevTAU^{\!\!m-1})\Colon\nabla\vv_\TAU^m
}{
1+\big(\|\wt\ENG(J_\TAU^{m-1},\FedevTAU^{\!\!m-1},\theta_\TAU^{m-1})\|_{L^1(\varOmega)}{-}
\THRESHOLD\big)^+}
\,\d\xx
\\[-.3em]&\ \nonumber
=-\,\frac1{\!\!1+\big(\|\wt\ENG(J_\TAU^{m-1},\FedevTAU^{\!\!m-1}\!,\theta_\TAU^{m-1})\|_{L^1(\varOmega)}{-}
\THRESHOLD\big)^+}\int_\varOmega
\boldT(J_\TAU^{m-1},\FedevTAU^{\!\!m-1})\Colon\strain(\vv_\TAU^m)\,\d\xx
\\&\ 
\le\frac{\|\boldT(J_\TAU^{m-1},\FedevTAU^{\!\!m-1})\|_{L^1(\varOmega;\R^{3\times3})}
\|\strain(\vv_\TAU^m)\|_{L^\infty(\varOmega;\R^{3\times3})}^{}
}
{\!\!1+\big(\|\wt\ENG(J_\TAU^{m-1},\FedevTAU^{\!\!m-1}\!,\theta_\TAU^{m-1})\|_{L^1(\varOmega)}{-}
\THRESHOLD\big)^+}
\le\Frac23C(1{+}\THRESHOLD)\|\strain(\vv_\TAU^m)\|_{L^\infty(\varOmega;\R^{3\times3})}^{}
\nonumber\\[-1.3em]\end{align}
with $C$ from \eq{ass-stress-control+}. Similarly,
\begin{align}\nonumber
\!\!\int_\varOmega&
\frac{-\,\ADI(J_\TAU^{m-1},\theta_\TAU^{m-1})\,{\rm div}\,\vv_\TAU^m}{
1+\big(\|\wt\ENG(J_\TAU^{m-1},\FedevTAU^{\!\!m-1},\theta_\TAU^{m-1})\|_{L^1(\varOmega)}{-}
\THRESHOLD\big)^+}
\,\d\xx
\\&\ 
\le\frac{\|\ADI(J_\TAU^{m-1}\!,\theta_\TAU^{m-1})\|_{L^1(\varOmega)}
\|{\rm div}\,\vv_\TAU^m\|_{L^\infty(\varOmega)}^{}
}
{\!\!1+\big(\|\wt\ENG(J_\TAU^{m-1},\FedevTAU^{\!\!m-1}\!,\theta_\TAU^{m-1})\|_{L^1(\varOmega)}{-}\THRESHOLD\big)^+}
\le\Frac23C(1{+}\THRESHOLD)\|{\rm div}\,\vv_\TAU^m\|_{L^\infty(\varOmega)}^{}
\end{align}
with $C$ again from \eq{ass-stress-control+}. Eventually, using
the matrix calculus $\DEV(F^\top S)\Colon(F^{-1}LF)=
(F^\top S-\frac13{\rm tr}(F^\top S))\Colon(F^{-1}LF)=
(SF^\top-\frac13{\rm tr}(F^\top S))\Colon L=
(SF^\top-\frac13{\rm tr}(S F^\top))\Colon L=\DEV(SF^\top)\Colon L$ for
$S=\varphi'(\FedevTAU^{\!\!m-1})$, $F=\FedevTAU^{\!\!m-1}$, and
$L=\nabla\vv_\TAU^m{-}\LdTAUm$, we obtain the estimate
\begin{align}\nonumber
&\!\!\int_\varOmega
\frac{-\,\boldM(\FedevTAU^{\!\!m-1})\Colon\LpTAU^m
}{
1+\big(\|\wt\ENG(J_\TAU^{m-1},\FedevTAU^{\!\!m-1},\theta_\TAU^{m-1})\|_{L^1(\varOmega)}{-}
\THRESHOLD\big)^+}
\,\d\xx
\\[-.3em]&\ \nonumber
=\!\frac{\int_\varOmega
\DEV(\varphi'(\FedevTAU^{\!\!m-1})(\FedevTAU^{\!\!m-1})^\top)
\Colon(\LdTAUm{-}\nabla\vv_\TAU^m)\,\d\xx}
  {\!\!1+\big(\|\wt\ENG(J_\TAU^{m-1}\!,\FedevTAU^{\!\!m-1}\!,\theta_\TAU^{m-1})\|_{L^1(\varOmega)}{-}\THRESHOLD\big)^+}
\\&\ \nonumber
\le\frac{\|\DEV\boldT(\FedevTAU^{\!\!m-1})\|_{L^1(\varOmega;\R^{3\times3})}
\|\nabla\vv_\TAU^m{-}\LdTAUm\|_{L^\infty(\varOmega;\R^{3\times3})}^{}
}
{\!\!1+\big(\|\wt\ENG(J_\TAU^{m-1}\!,\FedevTAU^{\!\!m-1}\!,\theta_\TAU^{m-1})\|_{L^1(\varOmega)}{-}\THRESHOLD\big)^+}
\le\Frac23C(1{+}\THRESHOLD)
\|\nabla\vv_\TAU^m{-}\LdTAUm\|_{L^\infty(\varOmega;\R^{3\times3})}^{}\,.
\nonumber\\[-1.3em]\label{enrg-est-of-stress3}
\end{align}\end{subequations}
The gravity term $\varrho_\TAU^m\GRAVITY_{\tau}^m\Cdot\vv_\TAU^m$ in
\eq{EUL-L-basic-engr-disc+} can be estimated by exploiting the
bound $\int_\varOmega\varrho_\TAU^m\,\d\xx=\int_\varOmega\varrho_0\,\d\xx<\infty$.
Therefore, from \eq{EUL-L-basic-engr-disc+} we obtain the a-priori estimates
\begin{subequations}\label{est-of-basic}\begin{align}\label{est-of-p/sqrt-disc}
&\|\opT/\sqrt{\orT}\|_{L^\infty(I;L^2(\varOmega;\R^3))}^{}\le C\,,
    \\&\label{est-of-v-disc}
    \|\strain(\ovT)\|_{L^2(I;W^{1,p}(\varOmega;\R^{3\times3}))}^{}\le C\,,
    \ \text{ and}
    \\&\label{est-of-Le-disc}
 \|\oLdT\|_{L^2(I\times\varOmega;\R^{3\times3})\,\cap\,L^q(I;W^{1,q}(\varOmega;\R^{3\times3}))}^{}\le C\,. 
\end{align}\end{subequations}

Having $\nabla\ovT$ estimated so that, in particular, due to $p>3$, $\ovT(t)$
is Lipschitz continuous on $\varOmega$ integrably in time,
by the discrete Gronwall inequality for sufficiently small time
steps $\tau>0$, we also obtain the estimate for the mass density $\orT$ and
for the sparsity $\osT=1/\orT$:
\begin{subequations}\label{est-of-basic+}\begin{align}\label{est-of-rho-disc}
  &\|\orT\|_{L^\infty(I;W^{1,r}(\varOmega))}^{}\le C_r\ \ \text{ and }\ \  
    \|\osT\|_{L^\infty(I;L^r(\varOmega))}^{}\le C_r
\,;
\intertext{here we used also that $\orT>0$ a.e., cf.\ \cite{Roub26TDCL,Roub25STDF}.
Moreover, from $\nabla\osT=\nabla(1/\orT)=-(\nabla\orT)/\orT^2
=-\osT^2\nabla\orT$, we also obtain}
&\|\osT\|_{L^\infty(I;W^{1,r}(\varOmega))}^{}\le C_r\ \ \text{ for any }
1\le r<\infty
\,,
\label{est-of-sigma-disc}
\intertext{from which we can also see that $\min\orT=1/\max\osT>0$.
From this, in fact we can improve \eq{est-of-sigma-disc} for $a=r$.
As a result, using also \eq{est-of-p/sqrt-disc}, we have also the
estimates for $\opT=\sqrt{\osT}(\opT/\sqrt{\orT})$ and for $\ovT=\osT\opT$
 as}
&\|\opT\|_{L^\infty(I;L^2(\varOmega;\R^3))}^{}\le C
\ \ \text{ and }\ \  
\|\ovT\|_{L^\infty(I;L^2(\varOmega;\R^3))}^{}\le C\,.
\label{est-of-p-v-disc}
\intertext{From this together with the estimate \eq{est-of-v-disc}, by the
Korn inequality
we can also read the bound of $\ovT$ in $L^2(I;W^{2,p}(\varOmega;\R^3))$.
This also allows us to estimate  $\nabla\opT=
\nabla(\orT\ovT)=\orT\nabla\ovT+\nabla\orT{\otimes}\ovT$, namely}
\label{est-of-p-disc}
&\|\opT\|_{L^{p}(I;W^{1,r}(\varOmega;\R^3))}^{}\le C\ \
\text{ and, by comparison, }\
\Big\|\pdt{\pp_\TAU}\Big\|_{L^{\min(p,q)}(I;W^{2,p}(\varOmega;\R^3))}\!\le C\,.
\intertext{By comparison, from \eq{EUL-L-visco-thermo-PT+0disc+}, we thus have}
\label{est-of-dt-rho-disc}
&\Big\|\pdt{\varrho_\TAU}\Big\|_{L^p(I;L^r(\varOmega))}^{}\le C_{p,r}\,.
\intertext{From (\ref{est-of-basic}b,c), using the kinematic flow-rule
\eq{EUL-L-visco-thermo-PT+2disc+} together from the regularity of the initial
condition
  $\Fedevzero\in W^{1,q}(\varOmega;\R^{3\times3})$,
  we also obtain}
\label{EUL-L-basic-engr-disc-dev-Ge-EPS}
&\|\oFedevT\|_{L^\infty(I;W^{1,\min(p,q)}(\varOmega;\R^{3\times3}))}^{}\le C\ \
\text{ and, by comparison, }\
\Big\|\pdt{\FedevTAU}\Big\|_{L^2(I{\times}\varOmega;\R^{3\times3}))}\!\le C\,.
\intertext{Since $\zetap(\chi,\cdot)$ can degenerate
for temperatures above the melting point
in the liquid phase (i.e.\ at $\chi=0$), we cannot read any estimate
for $\oLpT$ directly from the dissipative $\zetap$-term
in \eq{EUL-L-basic-engr-disc+}. Yet, from
$\oLpT=\uFedevT^{\!\!\!-1}\DEV(\nabla\ovT-\oLdT)\uFedevT$, we can also read}
&\|\oLpT\|_{L^2(I;W^{1,\min(p,q)}(\varOmega;\R^{3\times3}))}^{}\le C\,.
\label{EUL-L-basic-engr-disc-dev-Lp-EPS}
\end{align}\end{subequations}

Testing  \eq{EUL-L-visco-thermo-PT+3disc} by 1 and summing it over time
steps, we obtain
\begin{align}&\nonumber
  \int_\varOmega\wt\ENT(J_\TAU^k,\theta_\TAU^k)+\varrho_\TAU^k\ell\chi_\TAU^k\,\d\xx
  =\int_\varOmega\wt\ENT(J_0,\theta_0)+\varrho_0\ell\chi_0\,\d\xx
  \\[-.5em]&\quad
  +\,\tau\!\sum_{m=1}^k\int_\varOmega\xi(
  \chi_\TAU^m;\vv_\TAU^m,\LdTAUm,\LpTAU^m)
+\frac{\ADI(J_\TAU^m,\theta_\TAU^m)\,{\rm div}\,\vv_\TAU^m}{\!
1+\Big(\|\wt\ENG(J_\TAU^m,\FedevTAU^{\!\!m},\theta_\TAU^m)\|_{L^1(\varOmega)}{-}\THRESHOLD\Big)^+\!}\,\d\xx\,.
\end{align}
As the right-hand side is already estimated due to
(\ref{est-of-basic}b,c) and \eq{EUL-L-basic-engr-disc-dev-Lp-EPS},
\begin{align}
\|\otT\|_{L^\infty(I;L^1(\varOmega))}^{}\le C\,.
\label{est-of-basic-theta}\end{align}

Furthermore, analyzing \eq{EUL-L-visco-thermo-PT+4disc}, we obtain immediately
$\oxT\in L^\infty(I{\times}\varOmega)$. Testing \eq{EUL-L-visco-thermo-PT+4disc}
by the (discrete) convective derivative
$(\chi_\TAU^k{-}\chi_\TAU^{k-1})/\tau+\vv_\TAU^k\Cdot\nabla\chi_\TAU^k$, 
we obtain its estimate in $L^2(I{\times}\varOmega)$. More in detail,
\begin{align}\nonumber
\!\!\!\!\!\!\int_\varOmega\!\NU&\Big|\pdt{\chi_\TAU}+\ovT\Cdot\nabla\oxT\Big|^2\!
+H^{-1}(\oxT)\pdt{\chi_\TAU}+H^{-1}(\oxT)\ovT\Cdot\nabla\oxT\,\d\xx
\\[-.3em]&\nonumber
=\!\!\int_\varOmega\!\!\Upsilon(\otT{-}\theta_\text{\sc pt})\Big(\pdt{\chi_\TAU\!}
+\ovT\Cdot\nabla\oxT\Big)\,\d\xx
\le\big\|\Upsilon(\otT{-}\theta_\text{\sc pt})\big\|_{L^2(\varOmega)}^{}
\bigg\|\pdt{\chi_\TAU\!}+\ovT\Cdot\nabla\oxT\bigg\|_{L^2(\varOmega)}^{}
\\[-.3em]&
\le\Frac1{2\NU}
\big\|\Upsilon(\otT{-}\theta_\text{\sc pt})\big\|_{L^2(\varOmega)}^2
+
\Frac{\NU}2\Big\|\pdt{\chi_\TAU\!}+\ovT\Cdot\nabla\oxT\Big\|_{L^2(\varOmega)}^2.\!\!
\label{EUL-L-est-DT-chi-disc}\end{align}
Here we realize that $H^{-1}(\oxT)\pdt{}\chi_\TAU=\pdt{}\delta_{[0,1]}^{}(\oxT)=0$
and $H^{-1}(\oxT)\ovT\Cdot\nabla\oxT=\ovT\Cdot\nabla\delta_{[0,1]}^{}(\oxT)=0$
because $\oxT$ is valued in $[0,1]$. Here we used also that
$\Upsilon(\otT{-}\theta_\text{\sc pt})$ is bounded in $L^2(I{\times}\varOmega)$
due to the already proved bound 
\eq{est-of-basic-theta} and the growth assumption \eq{ass-Upsilon} on $\Upsilon$.
Therefore, integrating \eq{EUL-L-est-DT-chi-disc} over $I=[0,T]$, we have
\begin{align}\nonumber\\[-2.7em]
\Big\|\pdt{\chi_\TAU}+\ovT\Cdot\nabla\oxT\Big\|_{L^2(I\times\varOmega)}^{}\le C\,.
\label{est-DT-chi}\end{align}

This information can now be used for the estimation of $\nabla\otT$ by testing 
\eq{EUL-L-visco-thermo-PT+3disc} by $\omega(\otT):=1-1/(1{+}\otT)^a$ with
$a>0$. To this aim, we rewrite \eq{EUL-L-visco-thermo-PT+3disc} as
\begin{align}
&\nonumber
\frac{\wt\Ent_\TAU^k{-}\wt\Ent_\TAU^{k-1}\!\!}\tau\,
=\frac{\ADI(J_\TAU^k,\theta_\TAU^k)\,{\rm div}\,\vv_\TAU^k}{\!
  1{+}\big(\|\wt\Eng_\TAU^k
  \|_{L^1(\varOmega)}{-}\THRESHOLD\big)^+\!}
+{\rm div}\big(\kappa(
\chi_\TAU^k)\nabla\theta_\TAU^k{-}\wt\Ent_\TAU^k\vv_\TAU^k\big)
+\xi(\chi_\TAU^k;\vv_\TAU^k,\LdTAUk,\LpTAU^k)
-\ell \chi_\TAU^k\varrho_\TAU^k{\rm div}\,\vv_\TAU^k
\\[-.2em]&\hspace{.5em}
-\ell
\varrho_\TAU^k\Big(\frac{\chi_\TAU^k{-}\chi_\TAU^{k-1}\!\!}\tau
+\vv_\TAU^k\Cdot\nabla\chi_\TAU^k\Big)
-\ell
\Big(\chi_\TAU^{k-1}\frac{\varrho_\TAU^k{-}\varrho_\TAU^{k-1}\!\!}\tau
+\chi_\TAU^k\vv_\TAU^k\Cdot\nabla\varrho_\TAU^k\Big)
\ \ \ \text{with }\ \ \wt\Ent_\TAU^k=
\wt\ENT(J_\TAU^k,\theta_\TAU^k)\,.
\label{EUL-L-visco-thermo-PT+3modif}
\end{align}
The $\ell$-terms in \eq{EUL-L-visco-thermo-PT+3modif} are already estimated
due to the already obtained estimates
\eq{est-of-rho-disc}, \eq{est-of-dt-rho-disc}, and \eq{est-DT-chi}, specifically
\begin{align}
  &\nonumber
  \Big\|\oxT\orT{\rm div}\,\ovT+\orT\Big(\pdt{\chi_\TAU\!}\,{+}\ovT\Cdot\nabla\oxT\Big)
+\uxT\!\pdt{\varrho_\TAU}+\oxT\ovT\Cdot\nabla\orT\Big\|_{L^1(I\times\varOmega)}^{}
\\&
\le\|\orT\|_{L^2(I\times\varOmega)}^{}\Big(\|{\rm div}\,\ovT\|_{L^2(I\times\varOmega)}^{}\!
+\Big\|\pdt{\chi_\TAU\!}{+}\ovT\Cdot\nabla\oxT\Big\|_{L^2(I\times\varOmega)}^{}\Big)\!
+\Big\|\pdt{\varrho_\TAU}\Big\|_{L^1(I\times\varOmega)}^{}\!\!+
\|\ovT\Cdot\nabla\orT\|_{L^1(I\times\varOmega)}^{}.
\nonumber\end{align}
Also the $\ADI$- and the $\xi$-terms \eq{EUL-L-visco-thermo-PT+3modif} are estimated
in $L^1(I{\times}\varOmega)$. If $\wt\ENT=\wt\ENT(\theta)$ would be $J$-independent,
then, as in \cite{Roub23SPTC}, we obtain the estimate 
\begin{align}\label{est-W-eps}
\|\otT\|_{L^{5/4-\epsilon}(I;W^{1,5/4-\epsilon}(\varOmega))\,\cap\,L^{5/3-\epsilon}(I{\times}\varOmega)}^{}\le C_\epsilon\ \ \text{ for any }\ 0<\epsilon\le1/4\,
\end{align}
by a sophisticate use of the Gagliardo–Nirenberg, cf.\ also
\cite[Proposition 8.2.1]{KruRou19MMCM}). The general case when $\wt\ENT$ is
$J$-dependent, or in other words the $\varrho$-dependence of
$\wt\ENT=\wt\ENT(\rhoR/\varrho,\theta)$, this estimation is to enhanced as in
\cite{Roub25TEFS} for the regularized time-continuous problem, relying in particular
on the estimate of $\pdt{}\varrho_\tau$ in \eq{est-of-dt-rho-disc}.
Here we also use that, due to (\ref{est-of-basic+}a,b), $\oJT=\rhoR/\orT$ is
valued in $[\delta,1/\delta]$ to be used in \eq{Euler-ass-therm-psi-1}.

\def\etau{\TAU}

Now, testing \eq{EUL-L-visco-thermo-PT+4disc} by
${\rm div}(|\nabla\chi_\TAU^k|^{r-2}\nabla\chi_\TAU^k)$ with $1\le r<5/4$, we can estimate
\begin{align}\nonumber
&\Frac1r\int_\varOmega\frac{|\nabla\chi_\TAU^k|^r-|\nabla\chi_\TAU^{k-1}|^r}{\tau}
  \le\int_\varOmega\Big(\frac{\nabla\Upsilon(\theta_\TAU^k{-}\theta_{\rm pt})}\NU
  -\vvk{\cdot}\nabla\chi_\TAU^k\Big)\cdot|\nabla\chi_\TAU^k|^{r-2}\nabla\chi_\TAU^k\,\d\xx
\\&\nonumber\quad=\int_\varOmega\!\frac{\Upsilon'(\theta_\TAU^k{-}\theta_{\rm pt})}{\NU}\nabla\theta_\TAU^k\Cdot|\nabla\chi_\TAU^k|^{r-2}
\nabla\chi_\TAU^k+|\nabla\chi_\TAU^k|^{r-2}\big(\nabla\chi_\TAU^k{\otimes}\nabla\chi_\TAU^k\big)
\Colon\strain(\vvk)-\Frac1r|\nabla\chi_\TAU^k|^q{\rm div}\,\vvk\,\d\xx
\\&\hspace{9em}\le\frac{\sup\Upsilon'}{\NU}\|\nabla\theta_\TAU^k\|_{L^r(\varOmega;\R^3)}^r\!+
\big(1{+}\|\strain(\vvk)\|_{L^\infty(\varOmega;\R^{3\times3})}^{}\big)
\|\nabla\chi_\TAU^k\|_{L^r(\varOmega;\R^3)}^r\,,
\label{grad-chi}
\end{align}
where we used $\nn\Cdot\vv_\TAU^k=0$. The first inequality in \eq{grad-chi}
follows from the convexity of $|\cdot|^r$ and from that, written formally,
$\nabla H^{-1}(\chi_\etau^k)\cdot|\nabla\chi_\etau^k|^{r-2}\nabla\chi_\etau^k
=\partial^2\delta_{[0,1]}^{}(\chi_\etau^k)|\nabla\chi_\etau^k|^r\ge0$, where
$\partial^2\delta_{[0,1]}^{}$ denotes the (generalized) Hessian of the convex
indicator function of the interval $[0,1]$. Therefore, we obtain the estimate
\begin{subequations}\label{est-of-derivatives-chi}
  \begin{align}\label{est-of-derivatives-chi-1}
    &\|\nabla\oxT\|_{L^\infty(I;L^{5/4-\epsilon}(\varOmega;\R^3))}^{}\le C\ \ \
    \text{ with }\ 0<\epsilon\le1/4\,.
\intertext{Exploiting the calculus
$\nabla\ouT=\nabla(\wt\ENT(\oJT,\otT)+\orT\ell\oxT)
=\wt\ENT_J'(\oJT,\otT)\nabla\oJT+\wt\ENT_\theta'(\oJT,\otT)\nabla\otT
+\orT\ell\nabla\oxT+\ell\oxT\nabla\orT$
with $\nabla\oJT$ bounded in $L^\infty(I;L^r(\varOmega;\R^3))$ for any $1\le r<+\infty$
and relying on \eq{est-of-derivatives-chi-1} and \eq{est-W-eps}, we have also the
bound 
}
  &\|\ouT\|_{L^\infty(I;L^1(\varOmega))\,\cap\,
      L^{5/4-\epsilon}(I;W^{1,5/4-\epsilon}(\varOmega))\,\cap\,L^{5/3-\epsilon}(I\times\varOmega)}^{}\le C_\epsilon\ \
    \text{ with $\ 0<\epsilon\le1/4$}\,.
\intertext{From \eq{est-DT-chi}, realizing the bound of
$\ovT\Cdot\nabla\oxT$ in $L^p(I;L^s(\varOmega))$, we can now read also the estimate}
&\Big\|\pdt{\chi_\TAU}\Big\|_{L^2(I;L^{5/4-\epsilon}(\varOmega))}^{}\le C\,.
\intertext{Eventually, by comparison, from \eq{EUL-L-visco-thermo-PT+3disc+}
  when exploiting the estimates \eq{est-of-v-disc}, \eq{est-of-p-v-disc}, and
  \eq{est-W-eps} leading in particular to the bound of $\ouT\ovT$ surely in
$L^{15/14}(I;L^{5/4}(\varOmega;\R^3))$, we obtain also}
&\Big\|\pdt{\Ent_\TAU}\Big\|_{L^1(I;W^{1,5}(\varOmega)^*)}^{}\le C\,.
\end{align}\end{subequations}

\medskip{\it Step 3: Convergence for $\tau\to0$ in the mechanical part}.
By the Banach selection theorem, we can choose a subsequence con\-verging to
some limit $(\varrho,\sigma,\pp,\vv,\Ld,\Fedev,J,\Ent,\chi,\theta)$
in the sense \begin{subequations}\label{EUL-L-converge}
\begin{align}
&\orT\to\varrho\ \text{ and }\ \urT\to\varrho
\!\!\!\hspace{-7.5em}&&\hspace{3em}\text{weakly* in $\ L^\infty(I;W^{1,r}(\varOmega))$\,,}
\\&\varrho_\tau\to\varrho&&\text{weakly* in
$\ L^\infty(I;W^{1,r}(\varOmega))\,\cap\,W^{1,p}(I;L^r(\varOmega))$\,,}
\\&\label{EUL-L-converge-bar-s}
\osT\to\sigma&&\text{weakly* in $\ L^\infty(I;W^{1,r}(\varOmega))$\,,}
\\&\label{EUL-L-converge-bar-p}
\opT\to\pp&&\text{weakly\ \;in $\ L^p(I;W^{1,r}(\varOmega;\R^3))$\,,}
\\&\label{EUL-L-converge-p}
\pp_\tau\to\pp&&\text{weakly\ \;in $\
L^p(I;W^{1,r}(\varOmega;\R^3))\,\cap\,W^{1,p'}(I;W^{2,p}(\varOmega;\R^3)^*)$\,,}
\\&\label{EUL-L-converge-bar-v}
\ovT\to\vv&&\text{weakly\ \;in $\ L^a(I;W^{2,p}(\varOmega;\R^3))$\ \ with any $\,a>2$\,,}
\\&\oLdT\to\Ld&&\text{weakly in $\ L^2(I;W^{1,r}(\varOmega;\Rtr))$\ with $1\le r<q$\,,}
\label{EUL-L-converge-Ld-weak}
\\[-.1em]
&\tildeFedevTAU\to\Fedev,\ \oFedevT\to\Fedev\!,
\text{ and }\ \uFedevT\to\Fedev\!\ \!\!\!\hspace{-11em}
&&\hspace{11em}\text{weakly* in
$\,L^\infty(I;W^{1,\min(p,q)}(\varOmega;\R^{3\times3}))$,}\!\!
\label{EUL-L-converge-G-weak}\\
&\oJT\to J\text{ and }\ \uJT\to J
\!\!\!\hspace{-7.5em}&&\hspace{3em}\text{weakly* \,in
$\,L^\infty(I;W^{1,r}(\varOmega))$;}\!\!
\\&\otT\to\theta\ \text{ and }\ \utT\to\theta\hspace{-7.5em}&&\hspace{3em}\text{weakly* \,in $L^{5/4-\epsilon}(I;W^{1,5/4-\epsilon}(\varOmega))\,\cap\,L^{5/3-\epsilon}(I{\times}\varOmega)$;}\!\!
\\
&\oxT\to\chi\text{ and }\ \uxT\to\chi\!\!\!\hspace{-7.5em}&&\hspace{3em}\text{weakly* in $\ L^\infty(I{\times}\varOmega)\,\cap\,
  L^p(I;W^{1,5/4-\epsilon}(\varOmega))$
  \,,}
\\&\ouT\to \Ent\ \text{ and }\ \uuT\to\Ent\hspace{-7.5em}
&&\hspace{3em}
\text{weakly* \,in $L^{5/4-\epsilon}(I;W^{1,5/4-\epsilon}(\varOmega))\,\cap\,L^{5/3-\epsilon}(I{\times}\varOmega)$;}\!\!
\end{align}\end{subequations}
here the exponent $r$ in (\ref{EUL-L-converge}a-e) can be
arbitrarily large. Notably, the limit of $\orT$, $\urT$, and $\varrho_\tau$ is indeed the
same due to the control of $\pdt{}\varrho_\tau$ in $L^p(I;L^r(\varOmega))$ by 
comparison of \eq{EUL-L-visco-thermo-PT+0disc+} using the former
estimate in \eq{est-of-p-disc}; cf.\
\cite[Sect.8.2]{Roub13NPDE}. Similarly, the same is true also for $\opT$ and
$\pp_\tau$, and for $\oFedevT$ and $\FF_{{\rm e},\tau}$, etc.

By the (generalized) Aubin-Lions theorem and the Arzel\`a-Ascoli-type theorem
\cite[Sect.7.3]{Roub13NPDE} and the information about time derivatives
from Step~2, we have
\begin{subequations}\label{EUL-L-converge-strong}
\begin{align}\label{EUL-L-converge-strong-rho}
&&&\!\!\orT\to\varrho\ \ \text{ and }\ \ \urT\to\varrho\!\!\!\hspace{-5em}&&\hspace{5em}\text{strongly in }\ L^\infty(I{\times}\varOmega)\,,&&&&
\\\label{EUL-L-converge-strong-p-}
&&&\!\!\opT\to\pp\!\!\!\!\!\!&&\text{strongly in }\ L_{\rm w}^p(I;L^\infty(\varOmega;\R^3))\,,
\\&&&\!\!\oFedevT\!\to\Fedev\ \text{ and }\,\ \uFedevT\!\to\Fedev\!\ \!\!\!\hspace{-5em}
&&\hspace{5em}\text{strongly in $\,L^\infty(I{\times}\varOmega;\R^{3\times3})$.}\!\!
\label{EUL-L-converge-G-strong}
\\&&&\!\!\oxT\to\chi\ \ \text{ and }\ \ \uxT\to\chi\!\!\!\hspace{-5em}&&\hspace{5em}\text{strongly in }\ L^\infty(I{\times}\varOmega)\,,\ \text{ and }&&
\label{EUL-L-converge-strong-chi}
\\&&&\!\!\ouT\to\Ent\ \ \;\text{ and }\ \ \uuT\to\Ent\!\!\!\hspace{-5em}&&\hspace{5em}\text{strongly in }\ L^{5/3-\epsilon}(I{\times}\varOmega)\,.&&
\label{EUL-L-converge-strong-u}
\end{align}\end{subequations}
Since $\inf_{\TAU>0}\inf_{I\times\varOmega}\orT>0$, we have also
\begin{subequations}\begin{align}
&\oJT\to J\ \text{ and }\ \uJT\to J=\rhoR/\varrho
  &&\hspace{-1em}\text{strongly in }\ L^\infty(I{\times}\varOmega)
  \ \text{ and}
\label{EUL-L-converge-J-strong}
\\&\ovT={\opT}/{\orT}\to{\pp}/{\varrho}=\vv\!\!\!
&&
\label{Euler-small-converge-strong-vv}
\hspace{-1em}\text{strongly in 
  $\ L_{\rm w}^p(I;L^\infty(\varOmega;\R^3))$\,.}
\intertext{By the continuity and the uniform convergence of $[\wt\ENT(\oJT,\cdot)]^{-1}$
  and $[\wt\ENT(\uJT,\cdot)]^{-1}$ and by 
(\ref{EUL-L-converge-strong}a,d,e) and \eq{EUL-L-converge-J-strong}, we still obtain}
&\!\!\otT\to\theta\ \ \;\text{ and }\ \ \utT\to\theta
=\big[\wt\ENT(J,\cdot)\big]^{-1}(\Ent{-}\varrho\ell\chi)
\!\!\!\hspace{-5em}&&\hspace{3em}\text{strongly in }\ L^{5/3-\epsilon}(I{\times}\varOmega)\,.
\end{align}\end{subequations}
The strong convergence like \eq{EUL-L-converge-strong-u} holds also for
$\oeT=\wt\ENG(\oJT,\oFedevT,\otT)
=\varphi(\oFedevT)+\COUPLING(\oJT,0)+\wt\ENT(\oJT,\otT)$ and
$\ueT=\wt\ENG(\uJT,\uFedevT,\utT)$.

Moreover, we need also the strong convergence of $\nabla\ovT$. This can be
obtained by the interpolation in between the strong convergence
\eq{Euler-small-converge-strong-vv} and the weak convergence
\eq{EUL-L-converge-bar-v}. By the Gagliardo-Nirenberg
inequality, $\|\nabla\vv\|_{L^\infty(\varOmega;\R^{3\times3})}^{}
\le N \|\vv\|_{L^\infty(\varOmega;\R^3)}^{1/2}\|\nabla\vv\|_{W^{1,p}(\varOmega;\R^{3\times3})}^{1/2}$.
Thus we have 
\begin{align}\label{interpolation-nabla-v-strongly}
\nabla\ovT\to\nabla\vv\ \ \ \text{ strongly in }\ L_{\rm w}^p(I;L^\infty(\varOmega;\R^{3\times3}))
\,.
\end{align}

From the assumed convexity of $\zetap(\chi,\cdot)$ and thus also
the convexity of $\LL\mapsto\zetap(\chi,\Fedev^{-1}\LL\Fedev)$,
its differential $\LL\mapsto\Fedev^{-\top}[\zetap(\chi,\cdot)]'(\Fedev^{-1}\LL\Fedev)\Fedev^\top$ is monotone. As also $\Ld\mapsto\DD_1$ is (even
strongly) monotone, by using \eq{EUL-L-visco-thermo-PT+4disc+}
with the boundary condition $(\ovT\Cdot\nabla)\oLpT=\bm0$ coming
from \eq{enhanced-BC}, we have 
\begin{align}\nonumber
&c_0\|\oLdT-\Ld\|_{L^2(I\times\varOmega;\R^{3\times3})}^2
+c_q\|\nabla\oLdT-\nabla\Ld\|_{L^q(I\times\varOmega;\R^{3\times3\times3})}^q
\\[-.2em]&\hspace*{1em}\nonumber\le\int_0^T\!\!\!\int_\varOmega\bigg(
\bbD_1(\oLdT{-}\Ld)\Colon(\oLdT{-}\Ld)
+\big(\HYPER_1(\nabla\oLdT){-}\HYPER_1(\nabla\Ld)\big)\Vdots
\nabla(\oLdT{-}\Ld)
\\[-.2em]&\hspace*{3.5em}\nonumber
+\bigg(
\big[\zetap(\uxT,\cdot)\big]'
\Big(\uFedevT^{\!\!\!-1}\DEV\big(\nabla\ovT{-}\oLdT\big)\uFedevT\Big)
\\[-.2em]&\hspace*{3.5em}\nonumber
-\big[\zetap(\uxT,\cdot)\big]'
\Big(\uFedevT^{\!\!\!-1}\DEV\big(\nabla\ovT{-}\Ld\big)\uFedevT\Big)\bigg)
\Colon\Big(\uFedevT^{\!\!\!-1}(\oLdT{-}\Ld)\uFedevT\Big)\bigg)\,\d\xx\d t
\\[-.2em]&\hspace*{1em}\nonumber=\int_0^T\!\!\!\int_\varOmega\!\!\bigg(
\big[\zetap(\uxT,\cdot)\big]'
\Big(\uFedevT^{\!\!\!-1}\DEV\big(\nabla\ovT{-}\Ld\big)\uFedevT\Big)
\Colon\Big(\uFedevT^{\!\!\!-1}(\Ld{-}\oLdT)\uFedevT\Big)
\\[-.1em]&\hspace*{1.5em}\nonumber
-\bigg(\frac{\uFedevT^{\!\!\!-\top}\DEV\boldM(\uFedevT)\uFedevT^{\!\!\!\top}}
{1{+}\big(\|\wt\ueT\|_{L^1(\varOmega)}{-}\THRESHOLD\big)^+\!}\,
+\bbD_1\Ld\bigg)\Colon(\oLdT{-}\Ld)
-\HYPER_1(\nabla\Ld)\Vdots\nabla(\oLdT{-}\Ld)\bigg)\,\d\xx\d t\to0
\\[-1.1em]\label{EUL-L-converge-strong-Ld-}\end{align}
with some $c_0,c_q>0$ related to \eq{EUL-L-Jeff-St-ass-D-D}. In
\eq{EUL-L-converge-strong-Ld}, we used 
\eq{EUL-L-converge-Ld-weak}, \eq{EUL-L-converge-strong-chi},
\eq{interpolation-nabla-v-strongly},
and \eq{EUL-L-converge-G-strong}, together with the continuity and growth
of $(\theta,\chi,\LL)\mapsto[\zetap(\chi,\cdot)]'(\LL)$,
as assumed in \eq{EUL-L-Jeff-St-thermo-ass-zetap}. From this, we obtain
\begin{align}
  \oLdT\to\Ld\ \ \ \text{ strongly in }\ L^2(I{\times}\varOmega;\Rtr)\,\cap\,
  L^q(I;W^{1,q}(\varOmega;\Rtr))\,.
\label{EUL-L-converge-strong-Ld}\end{align}
From the latter equation in \eq{EUL-L-visco-thermo-PT+2disc+}
written as $\oLpT=\uFedevT^{\!\!\!-1}\DEV(\nabla\ovT{-}\oLdT)\uFedevT$
and from $\wb\DD_{1,\tau}$ in \eq{EUL-L-visco-thermo-PT+1disc+},
we also obtain, with $s$ as in \eq{interpolation-nabla-v-strongly},
\begin{subequations}\begin{align}
&\oLpT\to\Lp\ \ \ \text{ strongly in }\ L^2(I;L^s(\varOmega;\Rtr))\ \ \ \text{ and}
\label{EUL-L-converge-strong-Lp}
\\&\wb\DD_{1,\tau}\to\DD_1\ \ \ \text{ strongly in }\
L^{q'}(I;W^{1,q}(\varOmega;\R^{3\times3})^*)\,.
\label{EUL-L-converge-strong-D1}\end{align}
\end{subequations}

This already allows for passing to the limit in the
continuity equation \eq{EUL-L-visco-thermo-PT+0disc+}
and the equations (\ref{EUL-L-visco-thermo-PT+disc+}c,d).
The limit passage
in the quasilinear momentum equation 
\eq{EUL-L-visco-thermo-PT+1disc+}
is a bit more technical. Using the monotonicity of the operator
$\vv\mapsto{\rm div}({\rm div}(\HYPER_0|\nabla\strain(\vv)|^{p-2}\nabla\strain(\vv))
-\bbD_0\strain(\vv))$ with the boundary conditions 
in \eq{enhanced-BC}
and using \eq{EUL-L-visco-thermo-PT+1disc+} tested by $\ovT{-}\wt\vv$,
we obtain
\begin{align}\nonumber
\!\!\Big(&\inf_{|E|=1}{\bbD}E\Colon E\Big)
\|\strain(\ovT{-}\wt\vv)\|_{L^2(I\times\varOmega;\R^{3\times3})}^2
+\mu c_p\|\nabla\strain(\vv)(\ovT{-}\wt\vv)\|_{L^p(I\times\varOmega;\R^{3\times3\times3})}^p
\\\nonumber\hspace*{0em}
\!\!&\le\!\int_0^T\!\!\Bigg(\!\!\int_\varOmega\!\bigg(\bbD\strain(\ovT{-}\wt\vv)
\Colon\strain(\ovT{-}\wt\vv)
\\[-.6em]&\nonumber\hspace{1em}
 +\HYPER_0\Big(|\nabla\strain(\ovT)|^{p-2}\nabla\strain(\ovT)
-|\nabla\strain(\wt\vv)|^{p-2}\nabla\strain(\wt\vv)\Big)\Vdots\nabla\strain(\ovT{-}\wt\vv)
\bigg)\,\d\xx+\big\langle\wb\DD_{1,\tau},\nabla(\ovT{-}\wt\vv)\big\rangle
\Bigg)\d t
 \\[-.6em]&=\nonumber
 \int_0^T\!\!\Bigg(\!\int_\varOmega\bigg(
 \Big(\orT\overline\GRAVITY_{\tau}-\pdt{\pp_\TAU}\Big)
 \Cdot(\ovT{-}\wt\vv)
 -\Big(\uTT{-}\opT{\otimes}\ovT\Big)\Colon\strain(\ovT{-}\wt\vv)
\\[-.6em]&\nonumber\hspace{1em}
-\bbD_0\strain(\wt\vv)\Colon\strain(\ovT{-}\wt\vv)
-\HYPER_0|\nabla\strain(\wt\vv)|^{p-2}\nabla\strain(\wt\vv)\Vdots\nabla\strain(\ovT{-}\wt\vv)
\bigg)\,\d\xx+\big\langle\wb\DD_{1,\tau},\nabla(\ovT{-}\wt\vv)\big\rangle
\Bigg)\d t
\\[-.5em]&\le\nonumber
 \int_0^T\!\!\Bigg(\!\int_\varOmega\bigg(
\orT\overline\GRAVITY_{\tau}\Cdot(\ovT{-}\wt\vv)
 +\pdt{\pp_\TAU}\Cdot\wt\vv
-(\opT{\otimes}\ovT)\Colon\strain(\wt\vv)
-\big(\bbD_0\strain(\wt\vv){+}
\uTT\big)\Colon\strain(\ovT{-}\wt\vv)
\\[-.8em]&\hspace{.2em}
-\HYPER_0|\nabla\strain(\wt\vv)|^{p-2}\nabla\strain(\wt\vv)\Vdots\nabla\strain(\ovT{-}\wt\vv)
\bigg)\,\d\xx+\big\langle\wb\DD_{1,\tau},\nabla(\ovT{-}\wt\vv)\big\rangle\Bigg)\d t+
\int_\varOmega\frac{|\pp_0|^2}{2\varrho_0}-\frac{|\pp_\TAU(T)|^2}{2\varrho_\TAU(T)}\,\d\xx
\nonumber\\[-.8em]
\label{EUL-L-strong+}\end{align}
with $\uTT=(\boldT(\uJT,\uFedevT)+\ADI(\uJT,\utT)\bbI)/(1+(\wt\ENG(\uJT,\uFedevT,\utT)-\THRESHOLD)^+)$
for any $\wt\vv\in L^p(I;W^{2,p}(\varOmega;\R^3))$ and with 
$\langle\cdot,\cdot\rangle$ denoting here the duality between
$W^{1,q}(\varOmega;\R^{3\times3})^*$ and $W^{1,q}(\varOmega;\R^{3\times3})$.
The last inequality in \eq{EUL-L-strong+} has again exploited the
convexity of the kinetic energy $(\pp,\varrho)\mapsto\frac12|\pp|^2/\varrho$
in the calculus:
\begin{align}\nonumber
\int_\varOmega\frac{|\pp_\TAU(T)|^2\!}{2\varrho_\TAU(T)}
-\frac{|\pp_0|^2}{2\varrho_0}\,\d\xx
&\le\int_0^T\!\!\!\int_\varOmega\!\pdt{\pp_\TAU\!}\Cdot\ovT
-\frac{|\ovT|^2\!}2\,\pdt{\varrho_\TAU}\,\d\xx\d t
\\&=
\int_0^T\!\!\!\int_\varOmega\!\pdt{\pp_\TAU\!}\Cdot\ovT
+\ovT\Cdot{\rm div}\big(\opT{\otimes}\ovT\big)
\,\d\xx\d t\,,
\label{EUL-L-strong-calc}
\end{align}
exploited the Green formula and the boundary condition $\ovT\Cdot\nn=0$.

Now we want to pass to the limit in \eq{EUL-L-strong+} or, more precisely,
to estimate the limit superior from above. For this, we again use
convexity of the kinetic energy, which causes the weak lower semicontinuity of
$(\varrho,\pp)\mapsto\int_\varOmega|\pp|^2/\varrho\,\d\xx$ as a convex functional
$\{\rho\in L^1(\varOmega);\,\rho\ge0\}\times L^2(\varOmega;\R^3)\to[0,+\infty]$.
Here we rely also on that $|\pp_\TAU(T)|^2/\varrho_\TAU(T)$ is bounded in
$L^1(\varOmega)$ due to \eq{est-of-p/sqrt-disc} and on that
$\varrho_\TAU(T)\to\varrho(T)$ even strongly in $C(\barOmega)$ due to
as, by the  Arzel\`a-Ascoli-theorem, $\varrho_\TAU\to\varrho$
strongly in $C(I{\times}\barOmega)$, and on that $\pp_\TAU(T)$ converges
weakly in $L^2(\varOmega;\R^3)$ due to \eq{est-of-p-v-disc}
to its limit which is $\pp(T)$ because
simultaneously $\pp_\TAU(T)\to\pp(T)$ weakly in $W^{2,p}(\varOmega;\R^3)^*$
due to \eq{EUL-L-converge-p}. For the term
$(\opT{\otimes}\ovT)\Colon\strain(\wt\vv)$, we use
\eq{EUL-L-converge-strong-p-} with \eq{Euler-small-converge-strong-vv}
which implies in particular that
$\opT{\otimes}\ovT\to\pp\,{\otimes}\,\vv$ even strongly in
$L^{p/2}(I{\times}\varOmega;\Rsym)$.
All this allows us to estimate of the limit superior of
\eq{EUL-L-strong+} from above:
\begin{align}
\nonumber
&\!\!\limsup_{\tau\to0}\bigg(\Big(\inf_{|E|=1}{\bbD}E\Colon E\Big)
\|\strain(\ovT{-}\wt\vv)\|_{L^2(I\times\varOmega;\R^{3\times3})}^2
+\mu c_p\|\nabla\strain(\ovT{-}\wt\vv)\|_{L^p(I\times\varOmega;\R^{3\times3\times3})}^p\bigg)
\\&\nonumber
\le\int_0^T\!\!\bigg\langle\pdt{\pp},\wt\vv\bigg\rangle
+\!\int_\varOmega\!\!\bigg(\!\varrho\GRAVITY\Cdot(\vv{-}\wt\vv)
-(\pp\otimes\vv)\Colon\strain(\wt\vv)
  -\big(\TT{+}\bbD_0\strain(\wt\vv)\big)
  \Colon\strain(\vv{-}\wt\vv)
  +\bbD_1\Le\Colon\nabla(\vv{-}\wt\vv)
\\[-.3em]&\nonumber\hspace{6.3em}
+\Big(\HYPER_1|\nabla\Ld|^{q-2}\nabla\Ld\!
{-}\HYPER_0|\nabla\strain(\wt\vv)|^{p-2}\nabla\strain(\wt\vv)\Big)
\bigg)\,\d\xx\d t+
\int_\varOmega\frac{|\pp_0|^2}{2\varrho_0}-\frac{|\pp(T)|^2}{2\varrho(T)}\,\d\xx
\\[-.4em]&=\nonumber
 \int_0^T\!\!\bigg\langle\pdt{\pp},\wt\vv-\vv\bigg\rangle
 +\int_\varOmega\!\bigg(\,\varrho\GRAVITY\Cdot(\vv{-}\wt\vv)
-\big(\TT{-}\pp\otimes\vv\big)\Colon\strain(\vv{-}\wt\vv)
+\big(\bbD_1\Le\!{-}\bbD_0\strain(\wt\vv)\big)\Colon
\nabla(\vv{-}\wt\vv)
\\[-.3em]&\hspace{8.3em}
+\Big(\HYPER_1|\nabla\Ld|^{q-2}\nabla\Ld\!
{-}\HYPER_0|\nabla\strain(\wt\vv)|^{p-2}\nabla\strain(\wt\vv)\Big)\Vdots\nabla\strain(\vv{-}\wt\vv)
\bigg)\,\d\xx\d t,\!\!\!\!
\label{EUL-L-strong+++}\end{align}
where $\langle\cdot,\cdot\rangle$ denotes here the duality between
$W^{2,p}(\varOmega;\R^3)^*$ and $W^{2,p}(\varOmega;\R^3)$ and where, for
the last equality, we used the calculus like \eq{EUL-L-strong-calc}
but for the continuous-in-time limit which holds as an equality. Choosing
$\wt\vv=\vv$ and reminding also \eq{Euler-small-converge-strong-vv}, we obtained
\begin{align}\label{Euler-small-converge-bar-strong-v}
\ovT\to\vv\ \ \text{ strongly in }\ L^p(I;W^{2,p}(\varOmega;\R^3))\,.
\end{align}
Thus, we can 
make the limit passage in
the momentum equation \eq{EUL-L-visco-thermo-PT+1disc+}
in the weak sense \eq{def-thermo-Ch5-momentum}.
Also, the limit passage in the heat equation \eq{EUL-L-visco-thermo-PT+3disc+}
is not possible. In particular, for convergence of the dissipative heat,
we exploit the strong convergences \eq{EUL-L-converge-strong-Ld},
\eq{EUL-L-converge-strong-Lp}, and \eq{Euler-small-converge-bar-strong-v}.

These convergences already allow us to pass to the limit in the system
\eq{EUL-L-visco-thermo-PT+disc+}. In the limit, we thus obtain a weak
solution to the system
\begin{subequations}\label{EUL-L-visco-thermo-PT+EPS}
\begin{align}\label{EUL-L-visco-thermo-PT+0EPS}
&\!\!\pdt{\varrho}=-\,{\rm div}\,\pp
\ \ \ \text{ with }\ \,\pp=\varrho\vv\,,\!
\\[-.2em]&\nonumber
\!\!\pdt{\pp}=
\frac{{\rm div}\boldT(J,\Fedev)+\nabla\ADI(J,\theta)}{1+(\|\wt\ENG(J,\Fedev,\theta)\|_{L^1(\varOmega)}{-}\THRESHOLD)^+}
+{\rm div}\big(\DD_{1}+\DD_{0}
-\pp{\otimes}\vv\big)+\varrho\GRAVITY
\\[-.3em]&\nonumber
\hspace{12em}\text{with }\
\DD_{1}=\bbD_1\Ld-{\rm div}\big(\HYPER_1|\nabla\Ld|^{q-2}\nabla\Ld\big)
\\[-.4em]&
\hspace{12em}\text{and }\ \ \DD_{0}=\bbD_0\strain(\vv)
-{\rm div}\big(\HYPER_0\big|\nabla\strain(\vv)\big|^{p-2}\nabla\strain(\vv)\big)\,,
\label{EUL-L-visco-thermo-PT+1EPS}
\\[-.1em]&
\!\big[\zetap(\chi,\cdot)\big]'(\Lp)
=\frac{\DEV\boldM(\Fedev)}{1+(\|\wt\ENG(J,\Fedev,\theta)\|_{L^1(\varOmega)}{-}\THRESHOLD)^+}
+\Fedev^\top\DD_{1}\Fedev^{-\top},\label{EUL-L-visco-thermo-PT+4EPS}
\\[-.2em]
&\!\!\pdt{\Fedev}\,=\Ld\Fedev-(\vv\Cdot\nabla)\Fedev
\ \text{ with }\ \ \Ld=\DEV(\nabla\vv)-\Fedev\Lp\Fedev^{-1},\!\!\!
\label{EUL-L-visco-thermo-PT+2EPS}
\\[-.2em]&\nonumber
\!\!\pdt{\Ent}
=
\frac{\ADI(J,\theta)\,{\rm div}\,\vv}
{1+(\|\wt\ENG(J,\Fedev,\theta)\|_{L^1(\varOmega)}{-}\THRESHOLD)^+}
+{\rm div}\big(\kappa(\chi)\nabla\theta-\Ent\vv\big)
+\xi(\chi;\vv,\Ld,\Lp)\ \ \ 
\\[-.7em]&
\hspace{15.5em}
\text{with }\ \
\Ent
=\wt\ENT(J,\theta)+\varrho\ell\chi
\ \ \text{ and}\ J=\rhoR/\varrho\,,
\label{EUL-L-visco-thermo-PT+3EPS}
\\[-.5em]&\!\!
\pdt{\chi}+H^{-1}(\chi)\ni
\Upsilon(\theta{-}\theta_\text{\sc pt})/\NU-\vv\Cdot\nabla\chi\,.
\label{EUL-L-visco-thermo-PT+5EPS}
\end{align}\end{subequations}

\medskip{\it Step 4: Elimination of the truncation}.
Considering the initial conditions \eq{EUL-L-PT-IC} and the
prescribed acceleration $\GRAVITY$ satisfying \eq{EUL-L-Jeff-St-thermo-ass-IC}
and \eq{EUL-L-Jeff-St-thermo-ass-load} and reminding the choice of
$\THRESHOLD$ related to each weak solutions these initial conditions,
we can see, through the $L^\infty$-estimates (\ref{est-of-basic+}a-c,e),
and \eq{est-of-basic-theta} of $J=\rhoR/\varrho$, $\Fedev$, and $\theta$, that
$\|\wt\ENG(\rhoR/\varrho,\Fedev,\theta)\|_{L^1(\varOmega)}^{}<\THRESHOLD$
and thus we can see that the truncation \eq{EUL-L-visco-thermo-PT+EPS}
is inactive at least on a short time interval. Yet,
these $L^\infty$-estimates allow for continuation to the
whole time interval $I=[0,T]$.
Thus the truncation in \eq{EUL-L-visco-thermo-PT+EPS}
is not active for the solutions obtained in Step~3 so that
this solution solve (in the weak sense) the original problem.
\hfill$\Box$

\begin{remark}[The role of inertia.]\upshape
Note that, in \eq{EUL-L-converge-strong-Ld-}, we needed the
strong convergnce of velocity gradient for which
the inerial term guaranteeing \eq{Euler-small-converge-strong-vv}
and then the interpolation \eq{interpolation-nabla-v-strongly}
was used. Only after this, we were able to prove the
strong convergence of $\nabla^2\ovT$. In the quasistatic variant
of this model, this argumentation would be corrupted. This
differs from the usual Maxwellian rheology (as in an incomplete
melting) where the quasistatic variant is well possible and
even simplifies the analysis considerably.
\end{remark}

\begin{remark}[Sharp phase interfaces.]\upshape
The estimate \eq{est-of-derivatives-chi-1} allows for the $W^{1,1}$-regularity 
of the phase fraction $\chi$ provided the initial
condition $\chi_0\in W^{1,1}(\varOmega)$. Analytically,
the information about $\nabla\chi$ is needed for compactness
of $\chi$ 
and the strong convergence \eq{EUL-L-converge-strong-chi},
which would hold even for the BV$(\varOmega)$-limit.
Such generalization assuming $\chi_0\in{\rm BV}(\varOmega)$
makes possible for sharp interfaces between the
solid and the liquid parts, as it was the original 
idea behind Stefan-type problems.
\end{remark}

\begin{remark}[A full time discretization]\label{rem-full-discret}\upshape
The kinematic flow rule \eq{EUL-L-Jeff-St4-thermo-grad} for $\Fedev$ can be
replaced by
\begin{align}\label{exponential-transformation}
\DT{\bm{G}}={\rm e}^{-\bm{G}}\Lp{\rm e}^{\bm{G}}
\end{align}
and then put $\Fedev={\rm e}^{\bm{G}}$. 
It is noteworthy that $\det\Fedev=1$ if and only if ${\rm tr}\bm{G}=0$.
This can be advantageously used to discretize in time directly
\eq{exponential-transformation}, i.e.
\begin{align}\label{exponential-transformation+}
  \frac{\bm{G}_\tau^k-\bm{G}_\tau^{k-1}}\tau=
       {\rm e}^{-\bm{G}_\tau^k}\LpTAU^k{\rm e}^{\bm{G}_\tau^k}-
       (\vv_\tau^k\cdot\nabla)\bm{G}_\tau^k\,,
\end{align}
which then preserve the discrete $\bm{G}$ traceless and thus
$\FedevTAU^{\!\!k}={\rm e}^{\bm{G}_\tau^k}$ isochoric; here the matrix calculus
${\rm tr}(ABC)={\rm tr}(CAB)$ is used with ${\rm e}^A{\rm e}^B={\rm e}^{A+B}$
if $AB=BA$ and $e^{0}=\bbI$. On the other hand, evaluation (even approximately)
of ${\rm e}^{\bm{G}}$ needed for \eq{exponential-transformation+}
and for other occurrences in (\ref{EUL-L-visco-thermo-PT+disc}b-d,e) needs
some extra effort, so that such full time discretization is
similarly conceptual as the time semi-discretization
\eq{EUL-L-visco-thermo-PT+disc}.
\end{remark}

\begin{remark}[An intensive internal variable: 
    phase-field fracture in the solid.]\label{rem-fracture}\upshape
  The elastic res\-ponse in the shear part suggests
  to extend the model by involving the concept of the {\it damage} mechanics
  in the solid part or,
  more applicably, of the phase-field fracture. As usual, it used 
  an intensive scalar-valued internal variable, denoted here by $\alpha$
  and valued in $[0,1]$ with the (mathematical) convention
  that $\alpha=0$ means complete damage. In contrast
  to damage models in solids as used in engineering where
  damage can evolve unidirectionally without any possible
  healing, here healing should ultimately be allowed
  to avoid fracture remaining after melting in liquid.
  This can be modelled by a convex dissipation potential
  $\zeta(\chi,\cdot):\R\to\R^+$ which depends on the
  the phase fraction $\chi$. An example might be 
$\zeta(\chi,\DT\alpha)=K_0\DT\alpha^-+K_1(1{-}\chi)\DT\alpha^++\epsilon\DT\alpha^2$ with $K_0$ fracture toughness and
  $K_1$ large to prevent healing in the solid phase, while
  $\epsilon>0$ is a small coefficient reflecting possible rate
  dependence and makes the analysis easier.
Thus, any fracture created in solid is to be quite immediately
forgotten (healed) after melting (i.e.\ $\chi\searrow0$) towards liquid water
is completed.
   Considering $\alpha$ as an intensive variable, the flow rule
   should involve the convective derivative as
   \begin{align}&\partial\big[\zeta(\chi,\cdot)\big](\DT\alpha)
     =\psi_\alpha'(J,\Fedev,\theta,\alpha)+\varkappa\Delta\alpha\,.
   \label{damage}\end{align}
   The last term \eq{damage} controls a length-scale of cracks which is modelled
   by a {\it phase-field} approach to {\it fracture}.
   The prominent model thus augments
   the free energy $\psi=\psi(J,\Fedev,\theta,\alpha)$ by the terms like
$\frac1{2\varkappa}(1{-}\alpha)^2+\frac12\varkappa|\nabla\alpha|^2$. 
   In our convective variant, to comply with the energy balance,
   the Cauchy stress is to be then augmented by
   the {\it Korteweg}-type {\it stress}
 $\varkappa\nabla\alpha{\otimes}\nabla\alpha{-}\frac\varkappa2|\nabla\alpha|^2\bbI$.
   Cf.\ \cite{Roub23SPTC} for an analysis in the linearized convected
   variant with an incomplete melting only.
\end{remark}

\def\Conc{c}

\begin{remark}[An extensive internal variable: a diffusant content.]\upshape
  Another worthy enhancement of the basic model is by a 
 diffusant content, denoted by $\Conc\ge0$, considered as an extensive variable.
 The evolution covering effects as {\it diffusion}, {\it convection},
 {\it swelling}, or {\it squeezing}, is then governed by 
 \begin{align}\label{diffuse-eq}
&\DT\Conc={\rm div}(m\nabla\mu)-\Conc\,{\rm div}\,\vv
\ \ \ \text{ with }\ \ 
\mu=\psi_{\Conc}'(J,\Fedev,\Conc,\theta)
   \end{align}
 with $m=m(J,\Conc,\theta,\chi)$ denoting the mobility (diffusivity)
 and $\mu$ denoting the {\it chemical potential}. The diffusant flux $-m\nabla\mu$
 thus expresses the so-called {\it Fick's law}. A prominent application is
 for the salt solved in water. In the Earth's oceans, salinity is
 about 3.5\,wt\% while (equilibriated) salinity of ice is nearly zero.
 Therefore, it is a strong coupling of diffusion \eq{diffuse-eq}
 with the phase transition during freezing/melting. Likewise,
 some metals (e.g.\ iron) exhibit tendency to reduce the content
 of alloying elements (e.g.\ carbon, nickel, manganese, etc.)
 during solidification of the melt.
\end{remark}

\subsubsection*{Acknowledgement}
This research has been covered by the CSF project 26-21297S
and the institutional support RVO:61388998 (\v{C}R).
The author is thankful to Dirk Peschka for pointing out the
alternative in Remark~\ref{rem-full-discret}.


\end{document}